\documentclass[12pt, reqno]{amsart}

\usepackage{amsmath, amssymb, amsthm}
\usepackage{enumerate}



\begin{document}

 \bibliographystyle{plain}
 \newtheorem{theorem}{Theorem}
 \newtheorem{lemma}[theorem]{Lemma}
 \newtheorem{corollary}[theorem]{Corollary}
 \newtheorem{problem}[theorem]{Problem}
 \newtheorem{conjecture}[theorem]{Conjecture}
 \newtheorem{definition}[theorem]{Definition}
 \newtheorem{prop}[theorem]{Proposition}
 \numberwithin{equation}{section}
 \numberwithin{theorem}{section}

 \newcommand{\mo}{~\mathrm{mod}~}
 \newcommand{\mc}{\mathcal}
 \newcommand{\rar}{\rightarrow}
 \newcommand{\Rar}{\Rightarrow}
 \newcommand{\lar}{\leftarrow}
 \newcommand{\lrar}{\leftrightarrow}
 \newcommand{\Lrar}{\Leftrightarrow}
 \newcommand{\zpz}{\mathbb{Z}/p\mathbb{Z}}
 \newcommand{\mbb}{\mathbb}
 \newcommand{\B}{\mc{B}}
 \newcommand{\cc}{\mc{C}}
 \newcommand{\D}{\mc{D}}
 \newcommand{\E}{\mc{E}}
 \newcommand{\F}{\mathbb{F}}
 \newcommand{\G}{\mc{G}}
  \newcommand{\ZG}{\Z (G)}
 \newcommand{\FN}{\F_n}
 \newcommand{\I}{\mc{I}}
 \newcommand{\J}{\mc{J}}
 \newcommand{\M}{\mc{M}}
 \newcommand{\nn}{\mc{N}}
 \newcommand{\qq}{\mc{Q}}
 \newcommand{\PP}{\mc{P}}
 \newcommand{\U}{\mc{U}}
 \newcommand{\X}{\mc{X}}
 \newcommand{\Y}{\mc{Y}}
 \newcommand{\itQ}{\mc{Q}}
 \newcommand{\sgn}{\mathrm{sgn}}
 \newcommand{\C}{\mathbb{C}}
 \newcommand{\R}{\mathbb{R}}
 \newcommand{\T}{\mathbb{T}}
 \newcommand{\N}{\mathbb{N}}
 \newcommand{\Q}{\mathbb{Q}}
 \newcommand{\Z}{\mathbb{Z}}
 \newcommand{\A}{\mathcal{A}}
 \newcommand{\ff}{\mathfrak F}
 \newcommand{\fb}{f_{\beta}}
 \newcommand{\fg}{f_{\gamma}}
 \newcommand{\gb}{g_{\beta}}
 \newcommand{\vphi}{\varphi}
 \newcommand{\whXq}{\widehat{X}_q(0)}
 \newcommand{\Xnn}{g_{n,N}}
 \newcommand{\lf}{\left\lfloor}
 \newcommand{\rf}{\right\rfloor}
 \newcommand{\lQx}{L_Q(x)}
 \newcommand{\lQQ}{\frac{\lQx}{Q}}
 \newcommand{\rQx}{R_Q(x)}
 \newcommand{\rQQ}{\frac{\rQx}{Q}}
 \newcommand{\elQ}{\ell_Q(\alpha )}
 \newcommand{\oa}{\overline{a}}
 \newcommand{\oI}{\overline{I}}
 \newcommand{\dx}{\text{\rm d}x}
 \newcommand{\dy}{\text{\rm d}y}
\newcommand{\cal}[1]{\mathcal{#1}}
\newcommand{\cH}{{\cal H}}
\newcommand{\diam}{\operatorname{diam}}
\newcommand{\bx}{\mathbf{x}}
\newcommand{\Ps}{\varphi}

\parskip=0.5ex

\title[Quasicrystals and the Littlewood conjecture]{Perfectly ordered quasicrystals\\ and the Littlewood conjecture}
\author[Haynes, Koivusalo, Walton]{Alan~Haynes,~
Henna~Koivusalo,~
James~Walton}
\thanks{Research supported by EPSRC grants EP/L001462, EP/J00149X, EP/M023540}

\allowdisplaybreaks

\begin{abstract}
Linearly repetitive cut and project sets are mathematical models for perfectly ordered quasicrystals. In a previous paper we presented a characterization of linearly repetitive cut and project sets. In this paper we extend the classical definition of linear repetitivity to try to discover whether or not there is a natural class of cut and project sets which are models for quasicrystals which are better than `perfectly ordered'. In the positive direction, we demonstrate an uncountable collection of such sets (in fact, a collection with large Hausdorff dimension) for every choice of dimension of the physical space. On the other hand we show that, for many natural versions of the problems under consideration, the existence of these sets turns out to be equivalent to the negation of a well known open problem in Diophantine approximation, the Littlewood conjecture.
\end{abstract}

\maketitle

\section{Introduction}
\subsection{Statements of results}

A cut and project set $Y\subseteq \R^d$ is {\bf linearly repetitive (LR)}  if there exists a constant $C$ such that, for all sufficiently large $r$, every pattern of diameter $r$, which occurs somewhere in $Y$, occurs in every ball of diameter $Cr$ in $\R^d$. LR cut and project sets were introduced by Lagarias and Pleasants in \cite{LagaPlea2003} as models for `perfectly ordered' quasicrystals. In this paper we focus mostly on what we will refer to as {\bf cubical cut and project sets}, which are regular, totally irrational, aperiodic cut and project sets formed with a {\bf cubical window} (definitions of these terms are provided in the next section). In a previous paper \cite{HaynKoivWalt2015a} we gave a characterization of all LR cubical cut and project sets (as well as some canonical ones). We provided a necessary and sufficient condition that involved an algebraic component, that the sum of the ranks of the kernels of the linear forms defining the cut and project set should be maximal, and a Diophantine component, that the linear forms should be badly approximable when restricted to subspaces complementary to their kernels.

The motivation for this paper is to try to understand whether or not there could exist quasicrystals with even more structure than the perfectly ordered examples described above. In order to begin our discussion we refine the notion of LR as follows. Let $\mc{A}$ be a collection of bounded, measurable subsets of $\R^d$. We say that $Y$ is {\bf LR with respect to $\mathbf{\mc{A}}$} if there exists a constant $C>0$ such that, for every set $\Omega\in\mc{A}$, every pattern of shape $\Omega$ in $Y$ occurs in every ball of volume $C|\Omega|$ in $\R^d$, where $|\Omega|$ denotes the $d$-dimensional Lebesgue measure of $\Omega$. To clarify an important point, when we say that a pattern with a given shape `occurs' in a certain region, we mean that the region contains a point of $Y$ which is the distinguished point of a patch of that shape (precise definitions will be provided in Section \ref{sec.Prelim.Patterns}).

It is easy to see that $Y$ is LR, in the usual sense, if and only if it is LR with respect to a collection $\mc{A}$ consisting of all dilates of a single (and therefore any) fixed bounded convex set with non-empty interior. As an optimistic first question, we may ask whether or not there are cubical cut and project sets, with $d>1$, which are LR with respect to the collection of {\it all} bounded convex sets of volume at least $1$ in $\R^d$. Somewhat more modestly, we might also ask whether or not there are such sets which are LR with respect to the collection of all aligned rectangles of volume at least $1$ in $\R^d$ (we say that a rectangle in $\R^d$ is {\bf aligned} if all of its faces are parallel to coordinate hyperplanes). However, not too surprisingly, the answers to both of these questions turn out to be no, albeit for trivial reasons.

Basic considerations reveal that, in order to make our problem interesting, it is necessary to choose $\mc{A}$ so that there is a constant $\eta>0$ with the property that, for any shape $\Omega\in\mc{A}$, the number of integer points in any translate of $\eta\Omega$ is bounded above by a fixed constant multiple of the volume of $\Omega$. Taking this into consideration, there is more than one logical way to proceed, and for much of the paper we choose to restrict our attention to sets $\mc{A}$ which are collections of polytopes with integer vertices. In Section \ref{sec.AltChoice} we will revisit this decision and discuss another natural choice, collections of convex shapes with inradii uniformly bounded from below.

To begin with, let $\mc{C}_d$ denote the collection of convex polytopes in $\R^d$ with non-empty interior and vertices in $\Z^d$. If $d=1$ then being LR with respect to $\mc{C}_d$ is the same as being LR, in the usual sense. In this case, $k$ to $d$ cubical cut and project sets which are LR exist only when $k=2$. They correspond precisely to lines with badly approximable slopes, and they are therefore naturally parameterized by a collection of real numbers of Hausdorff dimension $1$ (this follows from \cite[Theorem 1.1]{HaynKoivWalt2015a}, but also from results in \cite{BesbBoshLenz2013}). Our first result shows that this is the only case in which such a set can be LR with respect to $\mc{C}_d$.
\begin{theorem}\label{thm.LRConvex}
For any $k$ and $d$ with $(k,d)\not=(2,1)$, there are no $k$ to $d$ cubical cut and project sets which are $\mathrm{LR}$ with respect to $\mc{C}_d.$
\end{theorem}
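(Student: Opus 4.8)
The plan is to argue by contraposition: assume $Y$ is a cubical cut and project set that is LR with respect to $\mc{C}_d$, and for $(k,d)\neq(2,1)$ derive a contradiction. Since $\mc{C}_d$ contains all dilates of a fixed cube, such a $Y$ is in particular LR in the usual sense, so I may assume from the outset that $Y$ satisfies the characterization of the companion paper (maximal sum of kernel ranks together with the badly approximable condition). In the case $d=1$ this already forces $k=2$, which is excluded, so from now on I focus on $d\geq 2$.

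The engine of the argument is to convert the single constant $C$ witnessing LR with respect to $\mc{C}_d$ into two quantitative constraints that must hold uniformly over every $\Omega\in\mc{C}_d$. Writing $\rho$ for the density of $Y$, a packing-by-balls argument gives a \emph{complexity bound}: the number of distinct patterns of shape $\Omega$ occurring in $Y$ is at most $\rho C|\Omega|(1+o(1))$, since every ball of volume $C|\Omega|$ must contain a distinguished point of each such pattern while containing only $\sim\rho C|\Omega|$ points of $Y$. Dually, there is a \emph{frequency bound}: each pattern of shape $\Omega$ has frequency at least $c/|\Omega|$ for a uniform $c>0$, since a rarer pattern would have distinguished points too sparse to meet every ball of volume $C|\Omega|$. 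Both quantities can be read off the cubical window: the acceptance regions for shape $\Omega$ are the cells of an axis-parallel arrangement in $[0,1)^{k-d}$ whose cut positions, in the $i$-th internal coordinate direction, are the values of the $i$-th internal coordinate form over the lattice points $v$ with $\pi(v)\in\Omega$ (and internal part in the difference window). Thus the complexity bound becomes a bound on the product, over the internal directions, of the number of distinct such coordinate values, while the frequency bound becomes a lower bound, of order $1/|\Omega|$, on the gaps between these cut positions.

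With this dictionary in place I would exhibit, for every admissible $(k,d)\neq(2,1)$ with $d\geq 2$, an integer polytope violating one of the two constraints. When the internal dimension satisfies $k-d\geq 2$ I expect the complexity bound to fail: choosing thin integer slabs elongated along suitable rational directions lying off the kernels of the coordinate forms forces at least two internal coordinates to vary independently, so the product of the numbers of distinct cut positions grows faster than the volume and eventually exceeds $\rho C|\Omega|$. When $k-d=1$ the complexity is always linear in the volume, and the obstruction must instead come from the frequency bound: here the aim is to construct integer polytopes, adapted through their difference bodies to the arithmetic of the single coordinate form, whose cut positions cluster to within $o(1/|\Omega|)$, producing a pattern of anomalously small frequency. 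The exceptional pair $(2,1)$ survives precisely because there both the physical and the internal space are one-dimensional, leaving no spare direction in which to elongate $\Omega$ or to decouple coordinates.

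The main obstacle is to make these violations \emph{unconditional}, valid for every such $Y$ irrespective of its Diophantine quality. A single shape, or a single aspect ratio, only drives the relevant quantity to the Dirichlet/Minkowski borderline, which coincides exactly with the LR threshold; indeed, restricting attention to aligned boxes turns the frequency question into the positivity of a Littlewood-type product of coordinates over the nonzero points of the associated lattice, which is itself an open problem. The point of admitting all of $\mc{C}_d$ is that the freedom to choose the polytope's shape, combined with a geometry-of-numbers argument (Minkowski's convex body theorem together with a pigeonhole across a range of aspect ratios), should extract a super-threshold violation that no choice of real parameters can evade. Carrying out this uniform extraction, and checking that the polytopes produced genuinely lie in $\mc{C}_d$ with the claimed volume, is where I expect the real work to lie.
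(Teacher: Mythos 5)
Your overall framework is the paper's: the dictionary between patterns of shape $\Omega$ and connected components of $\mathrm{reg}(\Omega)$ in the window, the two resulting constraints (complexity of shape $\Omega$ versus points in a ball of volume $C|\Omega|$, and smallest component versus largest orbit gap), the disposal of $d=1$ via the companion characterization, and the split into $k-d\ge 2$ and $k-d=1$. Your $k-d\ge 2$ sketch is also essentially what the paper does: it simply quotes Theorem \ref{thm.LRAlignedBoxes} (since $\mc{R}_d\subseteq\mc{C}_d$), whose $k-d>1$ case is an unconditional counting argument of exactly the ``two internal coordinates forced to vary independently'' type you describe. The genuine gap is in the case you yourself flag as ``the real work'': $k-d=1$, $d\ge 2$. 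There your proposal has no mechanism, and the tools you name cannot supply one. Minkowski's theorem and a pigeonhole over aspect ratios are Dirichlet-type devices: they produce small values $\|L(v)\|\ll 1/|\Omega|$, i.e.\ they drive the gap statistics exactly \emph{to} the LR threshold, never strictly below it; and your own observation that aligned boxes lead to a Littlewood-type problem shows that no purely Diophantine, shape-independent argument can close this case unconditionally --- that is the content of Theorem \ref{thm.LRAlignedBoxes}.

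The idea you are missing is that for $d\ge 2$ the violation is not Diophantine at all, but algebraic: it comes from the $\mathrm{SL}_d(\Z)$-orbit of the shape, a freedom which is trivial precisely when $d=1$. The paper takes $\Omega_{A,N}=A\cdot[-N,N]^d$ with $A\in\mathrm{SL}_d(\Z)$: its volume $(2N+1)^d$ does not depend on $A$, while its singular set is $\left\{\{(\alpha_1,\ldots,\alpha_d)A\cdot n\}:|n|\le N\right\}$. Writing $(\beta_1,\ldots,\beta_d)=(\alpha_1,\ldots,\alpha_d)A$, both $0$ and $\beta_1$ are singular points. Aperiodicity forces $1,\alpha_1,\ldots,\alpha_d$ to be $\Q$-linearly independent, hence the numbers $\sum_i a_i\alpha_i$ taken over \emph{primitive} integer vectors $a$ are dense modulo $1$; and every primitive vector extends to a basis of $\Z^d$, i.e.\ occurs as a column of some $A\in\mathrm{SL}_d(\Z)$. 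So for each fixed $N$ one can choose $A$ with $\|\beta_1\|$ smaller than any prescribed $C'/N^d$, creating a component of $\mathrm{reg}(\Omega_{A,N})$ of length $<C'/N^d$; meanwhile the relevant orbit in $\mc{W}$ has $\ll N^d$ points, hence a gap of length $>C'/N^d$, which by minimality can be positioned over that tiny component. Thus some pattern of shape $\Omega_{A,N}$ is missing from some ball of volume $C|\Omega_{A,N}|$, and LR fails --- using nothing but density, no geometry of numbers. This also explains the exceptional pair $(2,1)$ cleanly: $\mathrm{SL}_1(\Z)=\{\pm 1\}$ gives no twisting at all. Your proposal, by contrast, leaves the construction of the violating polytopes as an open problem and points toward tools that provably cannot resolve it.
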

Next we consider the question of whether or not there are non trivial examples of cut and project sets which are LR with respect to the subset $\mc{R}_d\subseteq\mc{C}_d$ consisting of aligned rectangles with integer vertices. Here the problem turns out to be slightly less straightforward. As our second result shows, answering it in full is equivalent to determining the falsity or truth of a well known long standing open problem, the Littlewood conjecture in Diophantine approximation, and its natural higher dimensional generalizations.
\begin{theorem}\label{thm.LRAlignedBoxes}
Suppose that $k>d\ge 1$. If $k-d>1$ then there are no $k$ to $d$ cubical cut and project sets which are $\mathrm{LR}$ with respect to $\mc{R}_d$. If $k-d=1$ then the following statements are equivalent:
\begin{itemize}
  \item[(C1)] There exists a $k$ to $d$ cubical cut and project set which is $\mathrm{LR}$ with respect to $\mc{R}_d$.\vspace*{.1in}
  \item[(C2)] There exist real numbers $\alpha_{1},\ldots ,\alpha_{d}$ satisfying
      \[\liminf_{n\rar\infty}n\|n\alpha_{1}\|\cdots\|n\alpha_{d}\|>0.\]
      \end{itemize}
\end{theorem}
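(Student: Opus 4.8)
The plan is to convert the repetitivity requirement into a statement about acceptance domains in the window, and then to read off the relevant Diophantine condition on the data defining the set. I work with the standard description of a codimension-$(k-d)$ cubical cut and project set: its points sit over those $\mathbf{n}\in\Z^{d}$ whose images $\big(L_{1}(\mathbf{n}),\ldots,L_{k-d}(\mathbf{n})\big)$ under the defining linear forms land, modulo $1$, in the cubical window $W$, the internal coordinate being exactly this point of the $(k-d)$-torus. The key structural fact I would record first is the acceptance-domain description of rectangular patterns: for an aligned box $\Omega$, two occurrences carry the same patch of shape $\Omega$ precisely when their internal coordinates lie in a common atom of the partition $\mc{P}(\Omega)$ of $W$ cut out by the translates $W-\mathbf{v}^{*}$ as $\mathbf{v}$ ranges over lattice differences with physical part in $\Omega-\Omega$. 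Counting atoms then gives the basic dichotomy driving the theorem: for a box of volume $V$ the number of distinct patterns of shape $\Omega$ grows like $V^{\,k-d}$, whereas any region of volume $CV$ can realise at most $\sim CV$ patterns.

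This dichotomy disposes of the case $k-d>1$ immediately. If $k-d\ge 2$ then the number of box patterns, $\sim V^{\,k-d}$, grows strictly faster than the capacity $\sim CV$ of a region of volume $CV$, so no constant $C$ can force every box pattern to recur in volume $C|\Omega|$; hence there is no set which is $\mathrm{LR}$ with respect to $\mc{R}_{d}$. (The same count shows such sets fail even ordinary $\mathrm{LR}$, consistent with the $d=1$ discussion in the introduction.) For $k-d=1$ the count is only linear, $\sim V$, so the obstruction disappears and the problem becomes the genuinely arithmetic one of whether the \emph{particular} small atoms are always hit.

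For $k-d=1$ there is a single form $L(\mathbf{n})=\alpha_{1}n_{1}+\cdots+\alpha_{d}n_{d}$ and the window is an interval, so $\mc{P}(\Omega)$ is a partition of an interval whose smallest atom has length comparable to $\delta(\Omega):=\min_{\mathbf{0}\neq\mathbf{n}\in\Omega-\Omega}\|L(\mathbf{n})\|$. I would prove the reformulation that a set is $\mathrm{LR}$ with respect to $\mc{R}_{d}$ if and only if $\inf_{\Omega}|\Omega|\,\delta(\Omega)>0$, the infimum over all aligned boxes. The forward direction is a density/pigeonhole estimate: if $|\Omega_{j}|\,\delta(\Omega_{j})\to 0$ then the pattern realising the smallest atom of $\mc{P}(\Omega_{j})$ has expected multiplicity $\sim C|\Omega_{j}|\,\delta(\Omega_{j})\to 0$ in a typical ball of volume $C|\Omega_{j}|$, so some ball misses it and $\mathrm{LR}$ fails. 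The reverse direction is the delicate half: one must show the internal coordinates of the points in a recurrence box $\Omega'$ of volume $C|\Omega|$ are $\delta(\Omega)$-dense in $W$, which amounts to bounding the \emph{largest} gap of the box-restricted orbit $\{L(\mathbf{m})\bmod 1:\mathbf{m}\in\Omega'\}$ by a constant multiple of its smallest gap, uniformly over all aspect ratios. Supplying such a multidimensional three-distance comparison — using the machinery already developed in \cite{HaynKoivWalt2015a} — is, I expect, the main obstacle of the whole argument, and is presumably the step that confines the clean answer to the codimension-one regime.

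Finally I would match $\inf_{\Omega}|\Omega|\,\delta(\Omega)>0$ with (C2). Evaluating the infimum on the minimal box $T_{i}=\max(|n_{i}|,1)$ enclosing a given $\mathbf{n}$ shows it is positive exactly when $\inf_{\mathbf{n}\neq\mathbf{0}}\big(\prod_{i}\max(|n_{i}|,1)\big)\|L(\mathbf{n})\|>0$, the multiplicative badly approximable condition for the linear form $L$. It then remains to pass from this linear-forms condition to the simultaneous condition of (C2), $\liminf_{n}n\|n\alpha_{1}\|\cdots\|n\alpha_{d}\|>0$; I would do this through the transference between the multiplicative linear-form and simultaneous problems, which shows that admissible tuples $\alpha_{1},\ldots,\alpha_{d}$ exist for one exactly when they exist for the other, yielding (C1)$\Leftrightarrow$(C2). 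For $d=1$ the two conditions coincide and reduce to $\alpha_{1}$ badly approximable, recovering the Sturmian case and confirming the boundary with Theorem~\ref{thm.LRConvex}.
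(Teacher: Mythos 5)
Your treatment of the case $k-d>1$ rests on a count that is false, and it fails exactly on the examples that matter. You claim that a box of volume $V$ carries $\asymp V^{k-d}$ distinct patterns. Take $k=4$, $d=2$ and the block-diagonal data $L_1(x)=\alpha x_1$, $L_2(x)=\beta x_2$ with $\alpha,\beta$ badly approximable. Since the faces of the cubical window are parallel to the internal coordinate hyperplanes, the partition of the window induced by an aligned box $\Omega$ with sides $N_1\times N_2$ is a product partition, and its number of atoms is at most the number of values of $\mc{L}_1$ times the number of values of $\mc{L}_2$, which is $\ll N_1N_2=V$; it is never of order $V^{2}$, for any aligned box. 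So the comparison against the capacity $\asymp CV$ of a region of volume $CV$ yields no contradiction at all. Worse, this example \emph{is} LR in the usual sense (by the characterization in \cite{HaynKoivWalt2015a}), so your parenthetical claim that the same count defeats ordinary LR contradicts known results; in fact, by the structure theorem of \cite{HaynKoivWalt2015a}, every usual-LR set with $k-d>1$ is block-diagonal over a full-rank sublattice, and LR with respect to $\mc{R}_d$ implies usual LR, so \emph{every} set your argument needs to exclude has box-pattern counts $\ll V$ rather than $\asymp V^{k-d}$. The paper's proof works around precisely this obstacle: it first invokes the structure theorem, then shows (using long thin boxes $\Omega_N^{(j)}$) that each block lattice $\Lambda_i$ must span a coordinate plane, and finally derives the contradiction from a \emph{coordinate} count rather than a volume count --- the $\gg N$ patterns of $\Omega_N^{(1)}$ are distinguished by the values of $\mc{L}_1$ alone, which depends on only $m_1<d$ coordinates, while the integer points in a ball of volume $CN$ produce only $\ll (CN)^{m_1/d}=o(N)$ distinct values of $\mc{L}_1$. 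None of this appears in your proposal.

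For $k-d=1$ your skeleton agrees with the paper's (small atoms of length $\asymp\delta(\Omega)$, reformulation as $\inf_\Omega|\Omega|\,\delta(\Omega)>0$, and transference to pass to (C2)), but the step you explicitly leave open --- that multiplicative bad approximability of $L$ forces the restricted orbit $\{L(m)\bmod 1:m\in C\Omega\cap\Z^d\}$ to be $c/|\Omega|$-dense uniformly over all aspect ratios --- is the substantive half of the equivalence, namely the implication (C2)$\Rightarrow$(C1) which produces the examples. It is a genuine gap: without it you have only (C1)$\Rightarrow$(C2). Moreover, no ``multidimensional three-distance'' comparison of largest to smallest gaps is needed, and none is known; the paper obtains the required density (Lemma \ref{lem.LCInhomogForm}) from Cassels' inhomogeneous transference theorem \cite[Section V, Theorems V and VI]{Cass1957}, applied to the auxiliary system of linear forms $f_1=(N_1\cdots N_d/c)\,(x_1\alpha_1+\cdots+x_d\alpha_d+x_{d+1})$, $f_{i+1}=x_i/N_i$, whose admissibility is exactly the dual form (Lemma \ref{lem.LCDualForm}, via Mahler) of condition (C2). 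Finally, your ``expected multiplicity'' argument for the easy direction still needs the positioning step used in the paper: total irrationality lets one place a gap of the orbit over the small atom, turning an average statement into an actual ball that misses the pattern.
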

The proofs of our theorems are based on a collection of observations from tiling theory and Diophantine approximation, which have been developed in several recent works \cite{BertVuil2000,HaynKoivSaduWalt2015,HaynKoivWalt2015a,Juli2010}. In \cite{Juli2010} it was explained how one can translate the problem of studying patterns in cut and project sets to a dual problem of studying connected components of sets in the internal space, defined by a natural (linear) $\Z^k$-action. As shown in \cite{HaynKoivWalt2015a}, the property of linear repetitivity then translates into a question about densities of orbits of points in the internal space under the $\Z^k$-action. With this as a backdrop, the theorems above are manifestations of various Diophantine properties of the subspace $E$ defining $Y$.

For the sake of readers who are not familiar with the Littlewood conjecture we have included a description of it in the next section. The important point is that, for $d>1,$ real numbers $\alpha_1,\ldots , \alpha_d$ satisfying (C2) above, are conjectured not to exist. What we can say definitively is that, from the proof of Theorem \ref{thm.LRAlignedBoxes}, and by a deep theorem by Einsiedler, Katok, and Lindenstrauss \cite[Theorem 1.6]{EinsKatoLind2006}, for $k\ge 3$ the collection of $k$ to $k-1$ cubical cut and project sets which are LR with respect to $\mc{R}_d$, if non-empty, is naturally parameterized by a subset of $\R^d$ with Hausdorff dimension $0$. By way of comparison, it follows from \cite[Corollary 1.3]{HaynKoivWalt2015a} that for $d\ge k/2$, the collection of cubical cut and project sets which are LR, in the usual sense, has Hausdorff dimension $d$.

In the special case of $k=3$ and $d=2$ the theorem above gives an equivalent formulation of the Littlewood conjecture. Furthermore, we have the following immediate corollary.
\begin{corollary}\label{cor.LRifLCTrue}
If the Littlewood conjecture is true then, as long as $(k,d)\not=(2,1)$, there are no $k$ to $d$ cubical cut and project sets which are $\mathrm{LR}$ with respect to $\mc{R}_d$.
\end{corollary}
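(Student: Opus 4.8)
The plan is to obtain the corollary as a direct logical consequence of Theorem \ref{thm.LRAlignedBoxes}, the key observation being that condition (C2) is nothing other than the negation of the Littlewood conjecture in dimension $d$. First I would recall the relevant form of the conjecture: for $d\ge 2$ it asserts that every choice of reals $\alpha_1,\ldots,\alpha_d$ satisfies
\[\liminf_{n\rar\infty}n\|n\alpha_1\|\cdots\|n\alpha_d\|=0.\]
Since this $\liminf$ is always nonnegative, its failure to vanish is exactly the content of (C2). Hence, for each fixed $d\ge 2$, statement (C2) holds if and only if the Littlewood conjecture fails in dimension $d$. It is moreover worth noting that the classical case $d=2$ already implies all the higher-dimensional versions, because $\|n\alpha_i\|\le 1/2$ for every $i$, so that $n\|n\alpha_1\|\cdots\|n\alpha_d\|\le n\|n\alpha_1\|\|n\alpha_2\|$; thus assuming ``the Littlewood conjecture'' in its classical form already suffices to make (C2) false for every $d\ge 2$.

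With this in hand I would split into cases according to the value of $k-d$, recalling that a $k$ to $d$ cut and project set requires $k>d\ge 1$. When $k-d>1$, the first assertion of Theorem \ref{thm.LRAlignedBoxes} already rules out any such set that is LR with respect to $\mc{R}_d$, with no appeal to Littlewood needed. When $k-d=1$, Theorem \ref{thm.LRAlignedBoxes} provides the equivalence of (C1) and (C2), so assuming the Littlewood conjecture is true makes (C2) false and therefore forces (C1) to fail; that is, there is no $k$ to $d$ cubical cut and project set which is LR with respect to $\mc{R}_d$.

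The one point demanding attention is the bookkeeping around the excluded pair $(k,d)=(2,1)$. In the range $k-d=1$ the relation $k=d+1$ shows that $d=1$ occurs precisely when $k=2$, so excluding $(2,1)$ leaves only $d\ge 2$ in this range---exactly the values for which (C2) coincides with the negation of Littlewood. (This is consistent with the discussion preceding the theorems: for $d=1$ condition (C2) reduces to $\liminf_{n\rar\infty}n\|n\alpha_1\|>0$, which is satisfied by any badly approximable $\alpha_1$, so genuine LR examples do exist there, which is why $(2,1)$ must be set aside.) Combining the two ranges gives non-existence for every admissible $(k,d)\ne(2,1)$. I expect no real obstacle in carrying this out: the entire analytic content already lives in Theorem \ref{thm.LRAlignedBoxes}, and the corollary reduces to identifying (C2) with the failure of Littlewood, together with the elementary case analysis just described.
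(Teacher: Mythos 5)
Your proof is correct and is essentially the paper's argument: the authors present the corollary as an immediate consequence of Theorem \ref{thm.LRAlignedBoxes}, exactly via the case split $k-d>1$ versus $k-d=1$ and the identification of (C2) with the failure of the $d$-dimensional Littlewood conjecture. Your additional observation that the classical two-dimensional conjecture implies the $d$-dimensional versions (since $\|n\alpha_i\|\le 1/2$) is a worthwhile detail that justifies reading the hypothesis as the classical conjecture alone, and it matches the paper's implicit use of this monotonicity.
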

It seems possible that the connections described above could serve as an indirect route for deriving information about the Littlewood conjecture. On the other hand, from the point of view of discovering very well ordered quasicrystals, the results presented so far leave us with the somewhat unsatisfying impression that, if they exist, such patterns must be exceedingly rare. However, we will now show how a minor adjustment in our generalized definition of LR leads to an abundance of cut and project sets which are indeed more than `perfectly ordered'.

For a collection $\mc{A}$ of bounded subsets of $\R^d$, we say that $Y\subseteq\R^d$ is {\bf $\mathbf{\mathrm{\bf LR}_\Omega}$ with respect to $\mathbf{\mc{A}}$} if there is a constant $C>0$ such that, for every set $\Omega\in\mc{A}$, every pattern of shape $\Omega$ in $Y$ occurs in every translate of $C\Omega$ in $\R^d$. The only difference between LR and $\mathrm{LR}_\Omega$ is that, {\it in the definition of $\mathrm{LR}_\Omega$, we search for patterns of a given shape in a region which is a dilate of the same shape.} As before, when $\mc{A}$ consists of all dilations of a fixed bounded convex set, the definition of `$\mathrm{LR}_\Omega$ with respect to $\mc{A}$' reduces to the original definition of LR.

First of all, for much the same reason as Theorem \ref{thm.LRConvex}, we have the following result.
\begin{theorem}\label{thm.LR'Convex}
For any $k$ and $d$ with $(k,d)\not=(2,1)$, there are no $k$ to $d$ cubical cut and project sets which are $\mathrm{LR}_\Omega$ with respect to $\mc{C}_d$.
\end{theorem}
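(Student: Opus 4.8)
The plan is to follow the strategy behind Theorem \ref{thm.LRConvex}, the only change being that the search region is now a dilate $C\Omega$ of the test shape in place of a ball of volume $C|\Omega|$. I would phrase everything through the dual picture recalled in the introduction. For a codimension $m=k-d$ cubical cut and project set, write $\pi_E,\pi_F$ for the projections onto the physical and internal spaces; a pattern of shape $\Omega$ is recorded by the cell of the internal torus cut out by the translates of the window facets by $\{\pi_F(q):q\in\Z^k,\ \pi_E(q)\in\Omega\}$. Because the window is a cube, these walls have only $m$ distinct normal directions and so form a multigrid, whence the number of distinct patterns of shape $\Omega$ is comparable to $\prod_{j=1}^m N_j(\Omega)$, where $N_j(\Omega)$ is the number of distinct values, modulo one, of the $j$-th internal coordinate over $\Z^d\cap\Omega$. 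For integer boxes, total irrationality gives $N_j(\Omega)\asymp\#(\Z^d\cap\Omega)\asymp|\Omega|$, while any translate of $C\Omega$ meets only $\asymp C^d|\Omega|$ points of $Y$ and hence realises at most that many patterns as distinguished points.

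When $m\ge 2$ this decides the matter by counting. Taking $\Omega$ to be an integer cube whose side tends to infinity makes the number of patterns of shape $\Omega$ of order $|\Omega|^m$, which for fixed $C$ eventually exceeds the $\asymp C^d|\Omega|$ patterns that any translate of $C\Omega$ can contain; thus every copy of $C\Omega$ omits some pattern of shape $\Omega$ and $\mathrm{LR}_\Omega$ fails. The case $d=1$ is the same computation, since $\mc{C}_1$ consists of intervals and $m=k-1\ge 2$ as soon as $k\ge 3$; here the cube argument is exactly the statement that a $k$ to $1$ cubical set has superlinear complexity, so such a set is not $\mathrm{LR}_\Omega$ (equivalently, for $d=1$, not LR). It is worth noting that passing from the ball of Theorem \ref{thm.LRConvex} to the dilate $C\Omega$ only replaces $C$ by $C^d$, which is immaterial against the superlinear factor $|\Omega|^{m-1}$.

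This leaves $m=1$ with $d\ge 2$, the genuinely Diophantine case and the one in which, by Theorem \ref{thm.LRAlignedBoxes}, testing against the aligned boxes $\mc{R}_d$ is only conditionally decisive. Now $N_1(\Omega)\le|\Omega|$, so counting says nothing, and I would instead translate $\mathrm{LR}_\Omega$ with respect to $\mc{C}_d$ into the orbit-density statement of \cite{HaynKoivWalt2015a}. The leverage is that $\mc{C}_d$ contains integer boxes aligned to every rational basis and elongated along every rational direction, whereas $\mc{R}_d$ sees only the coordinate axes; running the analysis of Theorem \ref{thm.LRAlignedBoxes} in each rational coordinate system forces the single linear form defining $Y$ to satisfy a condition of type (C2) simultaneously in every rational basis. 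The aim is to deduce that no totally irrational form in $d\ge 2$ variables can meet all of these conditions at once, thereby removing the Littlewood caveat that persists for the single family $\mc{R}_d$.

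The main obstacle is precisely this last deduction. One must identify the exact invariant probed by elongated integer boxes in a prescribed rational direction and show that, as the direction and the rational basis vary, the resulting family of conditions is unconditionally unsatisfiable for $d\ge 2$, in sharp contrast to the single-basis condition, which is only conjecturally so. Compounding this, $\mathrm{LR}_\Omega$ is formally weaker than the hypothesis of Theorem \ref{thm.LRConvex}: the dilate $C\Omega$ has volume $C^d|\Omega|$ rather than $C|\Omega|$, and it is moreover stretched in the same direction in which $\Omega$ is elongated, so a pattern that fails to recur in a round ball might nonetheless recur in $C\Omega$. One must therefore verify that the Diophantine obstruction survives both enlargements of the search region. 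Once this robustness is established, combining the three regimes shows that no constant $C$ can witness $\mathrm{LR}_\Omega$ with respect to $\mc{C}_d$ unless $(k,d)=(2,1)$.
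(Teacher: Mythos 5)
Your argument for $k-d\ge 2$ rests on the claim that, for an integer cube $\Omega$ of side $N$, total irrationality forces $N_j(\Omega)\asymp\#(\Z^d\cap\Omega)\asymp|\Omega|$. That claim is false: total irrationality constrains the forms $\mc{L}_1,\ldots,\mc{L}_{k-d}$ jointly, not individually. Take $k=4$, $d=2$, $L_1(x)=\alpha x_1$, $L_2(x)=\beta x_2$ with $\alpha,\beta$ badly approximable; this is a totally irrational, aperiodic cubical cut and project set (indeed one of the sets which Theorem \ref{thm.LR'AlignedBoxes} shows to be $\mathrm{LR}_\Omega$ with respect to $\mc{R}_2$), yet on the cube $[0,N]^2$ each $\mc{L}_i$ takes only $N+1$ distinct values. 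The number of patterns of shape $\Omega$ is then $\asymp N^2\asymp|\Omega|$, which never exceeds the $\asymp C^d|\Omega|$ points in a translate of $C\Omega$, so cubes (and, more generally, aligned boxes) detect nothing. In fact, for any set that is LR the cube count is automatically $\asymp|\Omega|$, so if your argument were correct it would show that no LR cubical sets with $k-d\ge2$ exist, contradicting \cite[Corollary 1.3]{HaynKoivWalt2015a}. The paper's proof avoids this by first reducing to LR (using squares), invoking the structure theory of \cite{HaynKoivWalt2015a} to obtain the sublattices $\Lambda_i$ with nonzero elements $\lambda_i$, and then testing against the \emph{needle-shaped} polytope $\Omega_N\in\mc{C}_d$, the convex hull of $\{e_1,\ldots,e_d\}\cup\{e_i+Nv:1\le i\le d\}$ with $v=\lambda_1+\cdots+\lambda_{k-d}$: along this diagonal direction every $\mc{L}_i$ takes $\gg N$ values (aperiodicity forces each $\mc{L}_i(\lambda_i)$ to be irrational), so $\mathrm{reg}(\Omega_N)$ has $\gg N^{k-d}$ components while $C\Omega_N$ contains only $\ll C^dN$ lattice points. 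This diagonal shape, which lies in $\mc{C}_d$ but not in $\mc{R}_d$, is the key idea your proposal is missing. (Your $d=1$ count, where intervals do give $\gg N^{k-1}$ patterns against $\ll CN$ points, is fine.)

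For $k-d=1$, $d\ge 2$ you identify the right theme---probe the single form in all integral bases---but you explicitly leave the decisive deduction as an open obstacle, so the proposal is incomplete exactly where the work lies. Moreover, no Littlewood-type input (and nothing like Tao's result, which is relevant only to the class $\mc{C}_d'$ of Section \ref{sec.AltChoice}) is needed: the paper settles this case unconditionally by the elementary argument of Theorem \ref{thm.LRConvex}. With $\Omega_{A,N}=A\cdot[-N,N]^d$ and $A\in\mathrm{SL}_d(\Z)$, the points $0$ and $\beta_1$ lie in $\mathrm{sing}(\Omega_{A,N})$, where $(\beta_1,\ldots,\beta_d)=(\alpha_1,\ldots,\alpha_d)A$; since aperiodicity makes $1,\alpha_1,\ldots,\alpha_d$ linearly independent over $\Q$, and every primitive vector of $\Z^d$ extends to a basis, the values of $\beta_1$ over $A\in\mathrm{SL}_d(\Z)$ are dense modulo $1$. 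Hence for fixed $N$ one can force a component of $\mathrm{reg}(\Omega_{A,N})$ of length $<C'/N^d$, while any orbit under the $\ll C^dN^d$ integer points of $C\Omega_{A,N}$ has a gap of length $>C'/N^d$, which, by density of the points $\rho^*(\tilde{y}+s)$ in $\mc{W}$, can be positioned over that small component. The only change from the LR argument is replacing the point count $\ll CN^d$ by $\ll C^dN^d$, exactly as you observe; your ``main obstacle'' dissolves once this $\mathrm{SL}_d(\Z)$ density trick is in hand.
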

Perhaps more surprisingly, in contrast with Theorem \ref{thm.LRAlignedBoxes}, we obtain the existence of uncountably many `super perfectly ordered' quasicrystals, when $\mc{C}_d$ is replaced by $\mc{R}_d$.
\begin{theorem}\label{thm.LR'AlignedBoxes}
For any $d\ge 1$ the set of $2d$ to $d$ cubical cut and project sets which are $\mathrm{LR}_\Omega$ with respect to $\mc{R}_d$, has Hausdorff dimension equal to $d$.
\end{theorem}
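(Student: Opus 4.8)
The plan is to pass to the Diophantine/dynamical reformulation supplied by \cite{Juli2010} and \cite{HaynKoivWalt2015a}, and then to prove matching lower and upper bounds on the Hausdorff dimension.

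\emph{Parameterization and translation.} A $2d$ to $d$ cubical cut and project set is determined by its physical space $E$, which (after discarding a measure-zero set of degenerate configurations) is the graph $E=\{(\bx,A\bx):\bx\in\R^d\}$ of a real $d\times d$ matrix $A=(\alpha_{kj})$; the internal dynamics is then the $\Z^d$-action $\mathbf m\mapsto A\mathbf m \bmod \Z^d$ on $\T^d$, with a cubical window which in these coordinates is the zonotope $W=[0,1]^d-A[0,1]^d$. Following the cited works, a pattern of shape $\Omega$ corresponds to a region of the arrangement cut in $W$ by the translates $\{A\mathbf j\bmod 1\}$ as $\mathbf j$ runs over $\Omega\cap\Z^d$, and $Y$ is $\mathrm{LR}_\Omega$ with respect to $\mc R_d$ precisely when there is a single constant $C$ such that, for every box $\Omega=\prod_{k}[0,N_k]$, the finite orbit $\{A\mathbf m\bmod 1:|m_k|\le CN_k\}$ meets every such region; equivalently this orbit is $(g_1(\mathbf N),\dots,g_d(\mathbf N))$-dense, where $g_k(\mathbf N)$ is the smallest nonzero value of $\|\sum_j\alpha_{kj}p_j\|$ over $\mathbf 0\ne\mathbf p\in\prod_l[-N_l,N_l]$. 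Counting the orbit points then forces the necessary volume estimate $\prod_k N_k\,g_k(\mathbf N)\gtrsim C^{-d}$ for every $\mathbf N$.

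\emph{Lower bound.} I would exhibit a $d$-dimensional family by taking $A=\mathrm{diag}(\alpha_1,\dots,\alpha_d)$ with each $\alpha_k$ badly approximable. Here $W$ is an axis-aligned box, the window arrangement factorizes, and $Y$ is the Cartesian product $Y_1\times\dots\times Y_d$ of the $2$ to $1$ cut and project sets attached to the $\alpha_k$. A patch of rectangular shape is then a product of interval-patches, and, crucially, because any tuple of points of the $Y_k$ lies in the product, its occurrence in a dilate $C\Omega$ decouples coordinatewise. Thus $\mathrm{LR}_\Omega$ with respect to $\mc R_d$ for $Y$ follows from each $Y_k$ being LR, which by \cite[Theorem 1.1]{HaynKoivWalt2015a} (or classically) is exactly badly approximability of $\alpha_k$. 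Since the set $\mathbf B$ of badly approximable numbers has $\dim_H\mathbf B=1$ and $\dim_H(\mathbf B^d)\ge d$ by superadditivity of Hausdorff dimension under products, and since imposing total irrationality removes only a lower-dimensional subset, this diagonally embedded family has Hausdorff dimension $d$.

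\emph{Upper bound and main obstacle.} For the reverse inequality I would feed elongated boxes into the density criterion. Fixing a coordinate $j$ and taking $\mathbf N=(1,\dots,1,N,1,\dots,1)$ with the entry $N$ in position $j$, the estimate $\prod_k N_k\,g_k(\mathbf N)\gtrsim C^{-d}$ together with $N_j g_j(\mathbf N)\le 1$ forces $g_k(\mathbf N)\gtrsim 1$ uniformly in $N$ for every $k\ne j$. Restricting the defining minimum to $\mathbf p=p_j e_j$ shows $g_k(\mathbf N)\le\min_{0<|p|\le N}\|\alpha_{kj}p\|$ whenever $\alpha_{kj}$ is irrational, and this tends to $0$ along the convergents, a contradiction. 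Hence every off-diagonal entry $\alpha_{kj}$ $(k\ne j)$ must be rational, so the collection of admissible matrices is contained in a countable union $\bigcup_{\mathbf r\in\Q^{d(d-1)}}\{A:\text{off-diagonal part}=\mathbf r\}$ of $d$-dimensional affine subspaces, giving $\dim_H\le d$ and, with the lower bound, the claimed value $d$. I expect the main difficulty to lie not in this counting step but in the first one: rigorously matching $\mathrm{LR}_\Omega$ to the $(g_1,\dots,g_d)$-density statement for the genuine cubical window, whose facets cease to be coordinate hyperplanes once $A$ is not diagonal, and in checking that the elongation argument survives the resulting perturbation of the arrangement.
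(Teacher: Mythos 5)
Your proposal is sound and proves the theorem, but by a route that differs from the paper's in both halves. The paper obtains Theorem \ref{thm.LR'AlignedBoxes} as an immediate consequence of Theorem \ref{thm.LR',k->d}: when $k=2d$ one is forced to take $m_1=\cdots=m_d=1$, so condition (C2') says precisely that each $\alpha_{i1}$ is badly approximable, and Jarnik's theorem plus a standard argument gives dimension $d$. In that theorem the positive (construction) direction is proved with the transference Lemmas \ref{lem.LCDualForm} and \ref{lem.LCInhomogForm} (separation of the values of $\mc{L}_i$ versus inhomogeneous density of the orbit), while the converse invokes the classification machinery of \cite{HaynKoivWalt2015a}: the lattices $\Lambda_i$, the rank conditions, and the fact that the subspaces $X_i$ must be coordinate planes. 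You replace both halves. Your lower bound uses the product structure $Y=Y_1\times\cdots\times Y_d$ of the diagonal examples, with occurrence of box-shaped patches in dilated boxes decoupling coordinatewise; this is correct (both the shapes and the search regions are aligned products, and every tuple of points of the factors is a point of $Y$), and it is more elementary than transference, though it is special to the case $m_i=1$ and needs a small extra remark to handle boxes with some bounded side lengths, since LR of the factors only controls sufficiently large patches. Your upper bound forces the algebraic structure directly by feeding elongated boxes into the volume estimate, rather than going through the kernel/rank analysis; both routes end at the same countable union of $d$-dimensional affine pieces.

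Three corrections are needed, none fatal. First, the window: $[0,1]^d-A[0,1]^d$ is the \emph{canonical} window (the image of the unit cube under $\rho^*$), not the cubical one. The cubical window is the unit cube of the internal space, independent of $A$; hence $\mathrm{sing}(\Omega)$ is always a union of coordinate-hyperplane slices and the components of $\mathrm{reg}(\Omega)$ are always aligned boxes whose sides are gaps between values of the $\mc{L}_k$ (Lemma \ref{lem.ConnComp}). The ``main difficulty'' you flag at the end is therefore vacuous for cubical windows; it is exactly the issue for canonical windows, which the paper treats separately via Lemmas \ref{lem.ChangeofWindow1} and \ref{lem.ChangeofWindow2}. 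Second, your step $N_jg_j(\mathbf{N})\le 1$ presupposes that $\alpha_{jj}$ is irrational, which you may not assume. The robust statement is: if some column $j$ contained \emph{two} irrational entries $\alpha_{k_1j},\alpha_{k_2j}$, then taking $\mathbf{N}=(1,\ldots,N,\ldots,1)$ gives $g_{k_1}(\mathbf{N}),g_{k_2}(\mathbf{N})\le (N+1)^{-1}$ while $\prod_kN_k=N$ and $g_k\le 1/2$ for all $k$, violating $\prod_kN_kg_k(\mathbf{N})\gtrsim C^{-d}$ for large $N$. Hence each column has at most one irrational entry, not necessarily the diagonal one; this still places the admissible matrices in a countable union of affine subspaces of dimension at most $d$. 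Third, the volume estimate itself does not follow from ``counting the orbit points'' alone: the number of components can be small even when one component is very thin. One needs the positioning argument used throughout the paper (Sections \ref{sec.LRThm1} and \ref{sec.LRThm2}): the smallest component has sides exactly $(g_1,\ldots,g_d)$, the set of orbit positions hitting it has measure at most $\#(C\Omega\cap\Z^d)\prod_kg_k$, and by total irrationality the internal points of $Y$ are dense, so if this measure is less than $1$ some translate of $C\Omega$ misses the corresponding pattern.
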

Our proof of this theorem also leads to an explicit method, described in \cite[Section 6]{HaynKoivWalt2015a}, for constructing such sets.

For $k$ to $d$ sets with $k\not=2d$, the situation is different from above. It turns out that for $d< k<2d$, $k$ to $d$ cubical cut and project sets which are $\mathrm{LR}_\Omega$ with respect to $\mc{R}_d$ are less likely to exist. This is demonstrated by the following theorem.
\begin{theorem}\label{thm.LR',k->d}
For any $k>d\ge 1$ the following are equivalent:
\begin{itemize}
  \item[(C1')] There exists a $k$ to $d$ cubical cut and project set which is $\mathrm{LR}_\Omega$ with respect to $\mc{R}_d$.\vspace*{.1in}
  \item[(C2')] There exist positive integers $m_1,\ldots , m_{k-d}$ with $d=m_1+\cdots +m_{k-d}$ and such that, for each $1\le i\le k-d$, we can find $\alpha_{i1},\ldots ,\alpha_{im_i}$ satisfying
      \[\liminf_{n\rar\infty}n\|n\alpha_{i1}\|\cdots\|n\alpha_{im_i}\|>0.\]
\end{itemize}
\end{theorem}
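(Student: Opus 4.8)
The plan is to prove both implications through the dual description of patterns in the internal space, as in the proof of Theorem~\ref{thm.LRAlignedBoxes}, and then to reduce the general codimension problem to a family of independent codimension-one problems. Write the internal space as $\R^{k-d}$ with its cubical window, and for $T=(T_1,\ldots,T_d)$ let $P_T\subseteq\R^{k-d}/\Z^{k-d}$ denote the set of internal projections of the lattice points whose physical projection lies in the aligned box $\prod_{j=1}^d[0,T_j]$. Following \cite{Juli2010,HaynKoivWalt2015a}, patterns of shape $\Omega=\prod_j[0,T_j]$ correspond to the cells cut out of the window by $P_T$, and being $\mathrm{LR}_\Omega$ with respect to $\mc{R}_d$ translates into the statement that there is a constant $C$ such that, for every $T$, the enlarged point set $P_{(CT_1,\ldots,CT_d)}$ meets every such cell. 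My first step is to record this reformulation carefully, since everything below is phrased in terms of how densely these box-indexed orbits fill the internal torus.

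For the implication (C2')$\,\Rightarrow\,$(C1') I would argue by an explicit product construction. Given the partition $d=m_1+\cdots+m_{k-d}$ and the numbers $\alpha_{ij}$, split the coordinates of $\R^k$ into $k-d$ blocks, the $i$-th being $\R^{m_i+1}$, and in block $i$ build the codimension-one cubical cut and project set determined by $\alpha_{i1},\ldots,\alpha_{im_i}$. Since $\sum_i(m_i+1)=k$ and $\sum_i m_i=d$, the Cartesian product of these blocks is a $k$ to $d$ cubical cut and project set whose window and internal torus both factor over the blocks. The point is that, for a rectangle $\Omega=\Omega_1\times\cdots\times\Omega_{k-d}$, a pattern of shape $\Omega$ factors as a tuple of patterns of shapes $\Omega_i$, and because the defining lattice also factors, such a pattern occurs in $C\Omega$ if and only if each block pattern occurs in $C\Omega_i$; thus the product is $\mathrm{LR}_\Omega$ with respect to $\mc{R}_d$ as soon as each block is $\mathrm{LR}_\Omega$ with respect to $\mc{R}_{m_i}$. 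For a single codimension-one block the internal torus is a circle, on which the hypothesis $\liminf_n n\|n\alpha_{i1}\|\cdots\|n\alpha_{im_i}\|>0$ forces the gaps of $P_T$ to be comparable to their average for every box; this is the Diophantine input behind the $k-d=1$ case of Theorem~\ref{thm.LRAlignedBoxes}, and it guarantees that the enlarged box covers every cell.

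The substantial direction is (C1')$\,\Rightarrow\,$(C2'), where the plan is to show that $\mathrm{LR}_\Omega$ forces the defining data to decouple into codimension-one blocks. Starting from such a set, I would form the dependency structure between the $k-d$ internal directions and the $d$ physical coordinates, and use extremely anisotropic rectangles—stretching only the coordinates in a chosen subset $S$ while leaving the rest of unit size—to probe it. Stretching only the directions in $S$ confines the relevant orbit to the subtorus generated by the corresponding columns, so the requirement that the enlarged box still meet every cell constrains which internal directions can be resolved by which physical coordinates; iterating over all $S$ should produce, after a permutation of coordinates, a partition into blocks $B_1,\ldots,B_s$ decoupling the system into independent sub-cut-and-project sets of codimensions $r_1,\ldots,r_s$ with $\sum_i r_i=k-d$ and $\sum_i|B_i|=d$. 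By the same product factorisation as above, each block inherits $\mathrm{LR}_\Omega$ with respect to the rectangles supported on its coordinates, and the anisotropic obstruction behind the negative part of Theorem~\ref{thm.LRAlignedBoxes} applies equally to $\mathrm{LR}_\Omega$, ruling out any block of codimension $r_i\ge2$. Hence every block has codimension exactly one, there are $k-d$ of them, and setting $m_i=|B_i|$ the $k-d=1$ case of Theorem~\ref{thm.LRAlignedBoxes} converts each block into the required condition $\liminf_n n\|n\alpha_{i1}\|\cdots\|n\alpha_{im_i}\|>0$.

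I expect the decoupling step to be the main obstacle: one must show that no genuinely irreducible coupling of two or more internal directions through a shared set of physical coordinates can be $\mathrm{LR}_\Omega$ with respect to aligned rectangles, and that the only way the anisotropic covering requirement can hold for every rectangle simultaneously is for the columns to organise into these rank-one blocks. This is precisely where the regularity and total irrationality hypotheses of cubical sets must be used, to exclude spurious rational degeneracies, and where the multiplicative, Littlewood-type nature of the obstruction enters, mirroring exactly why codimension two already fails in Theorem~\ref{thm.LRAlignedBoxes}.
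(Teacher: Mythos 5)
Your forward direction (C2')$\Rightarrow$(C1') is correct and is essentially the paper's own argument: the block-diagonal forms $L_i(x)=\sum_{j=1}^{m_i}x_{M_i+j}\alpha_{ij}$ used in the paper define exactly the product set you describe, and the separation/density estimates for each codimension-one factor are supplied by Lemmas \ref{lem.LCDualForm} and \ref{lem.LCInhomogForm}. The genuine gap is in (C1')$\Rightarrow$(C2'), in the step you yourself flag as ``the main obstacle'': the decoupling is never proved, and it is not obtained in the paper by probing with anisotropic rectangles. Instead, the paper first notes that $\mathrm{LR}_\Omega$ with respect to $\mc{R}_d$ implies ordinary LR (cubes belong to $\mc{R}_d$), and then imports the rank conditions established in the proof of \cite[Theorem 1.1]{HaynKoivWalt2015a}: writing $S_j\leqslant\Z^d$ for the kernel of $\mc{L}_j=L_j\bmod 1$ and $\Lambda_i=\bigcap_{j\neq i}S_j$, linear repetitivity forces $\mathrm{rk}(\Lambda_i+S_i)=d$ for every $i$ and $\mathrm{rk}(\Lambda_1+\cdots+\Lambda_{k-d})=d$. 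This already hands you the decomposition, with exactly one linear form per block; contrary to your outline, blocks of codimension $r_i\ge 2$ never arise and there is nothing to rule out. The anisotropic shapes $\Omega_N^{(j)}$ enter only afterwards, for the genuinely new (and much easier) step: showing that each $X_i=\langle\Lambda_i\rangle_\R$ is a coordinate plane, which is needed because $\mc{R}_d$ consists of \emph{aligned} rectangles; the only change from the LR case is replacing the count of $\ll CN$ integer points in a ball of volume $C|\Omega_N^{(j)}|$ by $\ll C^dN$ integer points in $C\Omega_N^{(j)}$. After relabeling coordinates, Lemmas \ref{lem.LCDualForm} and \ref{lem.LCInhomogForm} finish the contrapositive exactly as at the end of the proof of Theorem \ref{thm.LRAlignedBoxes}.

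The reason your sketch cannot be completed as stated is that what it extracts is counting information: for each stretched set of directions $S$, the number of connected components of $\mathrm{reg}(\Omega)$ must not exceed the number of orbit points available in $C\Omega$. Passing from such inequalities, valid for all aligned boxes, to the algebraic conclusion that the matrix $(\alpha_{ij})$ becomes block diagonal over a full-rank sublattice of $\Z^d$ is precisely the content of the characterization theorem of \cite{HaynKoivWalt2015a} --- a paper-length kernel/rank analysis, not a routine iteration over subsets $S$. So either you cite that result, in which case your outline collapses into the paper's proof (with the simplification that the blocks come out codimension one automatically), or you must reprove it, which your proposal does not attempt.
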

For $d>1$ the second condition in this theorem is predicated on the existence of counterexamples to the Littlewood Conjecture. Therefore, in analogy with Corollary \ref{cor.LRifLCTrue}, we obtain the following result.
\begin{corollary}\label{cor.LR'ifLCTrue}
If the Littlewood conjecture is true then, for any $d\ge 1$, and for any $k\not=2d$, there are no $k$ to $d$ cubical cut and project sets which are $\mathrm{LR}_\Omega$ with respect to $\mc{R}_d$.
\end{corollary}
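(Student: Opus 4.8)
The plan is to deduce this directly from Theorem \ref{thm.LR',k->d}, the only additional input being an elementary observation about how the higher dimensional analogues of the Littlewood conjecture follow from the classical case. Since a $k$ to $d$ cut and project set has internal space of positive dimension we have $k>d$ throughout, so Theorem \ref{thm.LR',k->d} applies and, by the equivalence (C1')$\Leftrightarrow$(C2'), it suffices to show that, assuming the Littlewood conjecture, condition (C2') can hold only when $k=2d$.

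First I would record the reduction of the higher dimensional Littlewood statement to the classical one. For any $m\ge 2$ and any reals $\alpha_1,\ldots,\alpha_m$, the bound $\|n\alpha_j\|\le 1/2$ for $j\ge 3$ gives
\[n\|n\alpha_1\|\cdots\|n\alpha_m\|\le n\|n\alpha_1\|\|n\alpha_2\|,\]
and since $\liminf a_n\le\liminf b_n$ whenever $a_n\le b_n$, while the left hand side is nonnegative, the assumption that the Littlewood conjecture makes the right hand side have liminf zero forces the left hand side to do the same. Consequently, under the Littlewood conjecture, there is no choice of $m\ge 2$ real numbers with
\[\liminf_{n\rar\infty}n\|n\alpha_1\|\cdots\|n\alpha_m\|>0.\]

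With this in hand, suppose (C2') holds, so that $d=m_1+\cdots+m_{k-d}$ with each block of size $m_i$ carrying numbers $\alpha_{i1},\ldots,\alpha_{im_i}$ of positive liminf. By the previous paragraph every block with $m_i\ge 2$ is ruled out by the Littlewood conjecture, so we must have $m_i=1$ for all $i$. But then the sum condition reads $d=k-d$, that is, $k=2d$. Taking the contrapositive, if $k\not=2d$ then (C2') fails, and hence by Theorem \ref{thm.LR',k->d} so does (C1'): there is no $k$ to $d$ cubical cut and project set which is $\mathrm{LR}_\Omega$ with respect to $\mc{R}_d$.

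Since the real content is already contained in Theorem \ref{thm.LR',k->d}, there is no genuine obstacle here; the argument is purely a matter of bookkeeping. I would nonetheless note for completeness the one point where the Littlewood conjecture is not even needed: when $k>2d$ one has $k-d>d$, so $d$ cannot be written as a sum of $k-d$ positive integers at all and (C2') fails unconditionally. The role of the Littlewood conjecture is precisely to eliminate the remaining range $d<k<2d$, where a decomposition of $d$ into the $k-d<d$ positive parts $m_i$ necessarily forces at least one part with $m_i\ge 2$.
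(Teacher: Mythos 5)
Your proposal is correct and follows the same route as the paper: the corollary is deduced directly from Theorem \ref{thm.LR',k->d} by observing that, under the Littlewood conjecture, every block in (C2') must have $m_i=1$ (since any block with $m_i\ge 2$ would yield a counterexample to the $m_i$-dimensional, and hence the classical, Littlewood conjecture), forcing $k-d=d$. Your added remark that the range $k>2d$ is ruled out unconditionally by the partition constraint is a nice clarification, but the substance matches the paper's intended argument.
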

Finally, we mention that many interesting cut and project sets can be obtained by using a {\bf canonical window} instead of a cubical one (see the definitions in the next section). We refer to such sets as {\bf canonical cut and project sets}. There are subtle technical issues in trying to move from a cubical cut and project set to the canonical cut and project set obtained from the same data (i.e. with the cubical window replaced by a canonical one). Perhaps surprisingly, there are examples in which the cubical cut and project set and its canonical counterpart are not mutually locally derivable (see \cite{Sadu2008} for the precise definition of this term). On the other hand, for many specific examples of interest (e.g. physical and internal space pairs which give rise to Amman-Beenker and Penrose tilings), we are able to argue directly to understand the structure of canonical cut and project sets from the corresponding cubical ones. These issues are discussed in some detail in \cite{HaynKoivWalt2015a}.

For the purposes of this paper, our results about cubical cut and project sets which are LR or $\mathrm{LR}_\Omega$ with respect to $\mc{C}_d$ do not immediately extend to canonical ones. The difficulty is essentially due to the fact that there are convex polytopes with integer vertices and arbitrarily small inradius. However, some of our results for $\mc{R}_d$ can be made to apply to canonical cut and project sets as well.
\begin{theorem}\label{thm.Canonical}
  Theorem \ref{thm.LRAlignedBoxes}, as well as Corollaries \ref{cor.LRifLCTrue} and \ref{cor.LR'ifLCTrue}, are true with the adjective`cubical' replaced by `canonical'.
\end{theorem}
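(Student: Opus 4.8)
The plan is to revisit the proofs of Theorem~\ref{thm.LRAlignedBoxes} and of the implication (C1$'$)~$\Rightarrow$~(C2$'$) inside Theorem~\ref{thm.LR',k->d}, and to verify that every step is insensitive to replacing the cubical window by the canonical one once we restrict attention to rectangular patch shapes. Recall from \cite{Juli2010,HaynKoivWalt2015a} the dual picture: writing $\R^k=E\oplus E^\perp$ with $E$ the physical and $E^\perp$ the internal space, the patch of shape $\Omega$ sitting at a point $y\in Y$ is determined by the internal coordinate of $y$ together with the partition of the window $W\subseteq E^\perp$ into acceptance domains, whose walls are the translates of $\partial W$ by the internal projections $\pi_{E^\perp}(v)$ of those lattice vectors $v\in\Z^k$ with $\pi_E(v)\in\Omega$. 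Both LR and $\mathrm{LR}_\Omega$ with respect to $\mc{R}_d$ translate into the same type of statement: for each rectangle $\Omega\in\mc{R}_d$, every acceptance domain must be met by the orbit $\{\pi_{E^\perp}(v):v\in\Z^k\}$ within a physical region of volume $O(|\Omega|)$ (respectively, within a dilate $C\Omega$), and the rate at which this covering happens is exactly what (C2) and (C2$'$) measure.

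The feature that makes $\mc{R}_d$ tractable where $\mc{C}_d$ is not is that an aligned rectangle with vertices in $\Z^d$ has every edge of length at least $1$, hence inradius at least $1/2$, uniformly over $\mc{R}_d$. I would use this lower bound to compare the two windows at bounded cost. The canonical window is the zonotope $W_{\mathrm{can}}=\pi_{E^\perp}([0,1]^k)$, which one may sandwich between two coordinate-aligned boxes, $c_1 W_\square\subseteq W_{\mathrm{can}}\subseteq c_2 W_\square$. Because rectangular patches have inradius bounded below, the number of lattice vectors projecting into a translate of $\eta\Omega$, and more importantly the combinatorial type of the acceptance-domain partition attached to a rectangle, change by at most a bounded multiplicative factor under this sandwiching. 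Consequently the covering rate governing LR (and $\mathrm{LR}_\Omega$) with respect to $\mc{R}_d$ agrees, up to constants absorbed into $C$, for the cubical and canonical sets built from the same subspace $E$. Hence (C1) holds for the canonical set if and only if it holds for the cubical one, so by Theorem~\ref{thm.LRAlignedBoxes} it is equivalent to (C2), while for $k-d>1$ the same comparison transfers the nonexistence statement directly. The $\mathrm{LR}_\Omega$ assertion follows in the same way from the (C1$'$)~$\Rightarrow$~(C2$'$) direction of Theorem~\ref{thm.LR',k->d}, and Corollaries~\ref{cor.LRifLCTrue} and~\ref{cor.LR'ifLCTrue} are then immediate, being purely formal consequences of these equivalences together with the conjectural impossibility of (C2) and (C2$'$) under Littlewood.

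The main obstacle I anticipate lies in the boundary analysis when $k-d\ge 2$, where $W_{\mathrm{can}}$ is a genuine higher-dimensional zonotope whose facets are not aligned with the coordinate hyperplanes of $E^\perp$. The acceptance domains are then cut by translated copies of these slanted facets, and the real danger is the appearance of acceptance domains with arbitrarily small inradius, which would invalidate the uniform covering bound and so destroy the comparison of the previous paragraph. Controlling this is precisely where the inradius lower bound for rectangular patches is indispensable: it keeps the contributing lattice vectors spread out enough that the partition induced by $\partial W_{\mathrm{can}}$ remains comparable to the one induced by $\partial W_\square$. I would therefore isolate, as the technical heart of the argument, a lemma bounding the inradius of the acceptance domains from below in terms of $|\Omega|$ and the Diophantine data of $E$, and then feed it into the covering estimate. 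In the codimension-one case $k-d=1$ this obstacle disappears entirely, since both windows are simply intervals differing only in length, so there the equivalence with (C2) is essentially immediate; the substance of the theorem is thus the higher-codimension transfer and the boundary control it demands.
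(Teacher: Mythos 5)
Your high-level goal (transfer the cubical results to canonical windows) is the same as the paper's, but the mechanism you propose for the transfer does not work, and the step you defer to a future lemma is exactly where the content lies. Sandwiching the canonical window between coordinate boxes gives no control over the acceptance-domain partition: the partition is cut by translates of $\partial\mc{W}$ under the $\Z^k$-action, and the facets of the canonical window (a zonotope) are slanted relative to the coordinate hyperplanes of $F_\rho$, so containment of windows says nothing about how these slanted cuts subdivide the window; in particular it does not bound the number, shape, or inradius of the resulting components. Worse, the two-sided equivalence your comparison purports to establish --- that for the same $E$ the cubical set is LR (or $\mathrm{LR}_\Omega$) with respect to $\mc{R}_d$ if and only if the canonical one is --- is precisely the statement the paper warns is \emph{false} in general: there are examples (see \cite{HaynKoivWalt2015a}) where a cubical set and its canonical counterpart are not mutually locally derivable and where this equivalence fails, which is why the paper's Lemma \ref{lem.ChangeofWindow1} carries the extra hypothesis that each $\rho^*(e_i)$ lies on a coordinate line $\R e_j$. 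So the ``technical heart'' you postpone (a lower bound on inradii of canonical acceptance domains, uniform enough to run the covering argument) is not a routine verification; in the generality your argument needs it, it is unavailable.

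The paper's actual proof never compares acceptance domains across the two windows. It uses the containment $\mc{W}_{\mathrm{cubical}}\subseteq\mc{W}_{\mathrm{canonical}}$, hence $Y_1\subseteq Y_2$ for the sets built from the same data, together with two local rules. First, $y\in Y_2$ belongs to $Y_1$ if and only if $y+v_i\in Y_2$ for every $i$, where $v_i=\pi(e_i)$; thus $Y_1$ is always locally derivable from $Y_2$, so if $Y_1$ fails to be LR (or $\mathrm{LR}_\Omega$) with respect to $\mc{R}_d$ then so does $Y_2$ (Lemma \ref{lem.ChangeofWindow2}). This single direction already yields every negative statement in the theorem: the $k-d>1$ nonexistence, the implication (C1)$\Rightarrow$(C2), and both corollaries under Littlewood. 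Second, when each $\rho^*(e_i)$ lies on a coordinate line --- which is automatic when $k-d=1$, since then $F_\rho$ is itself a line --- one also has $Y_2=\{y+v_I:y\in Y_1,\ I\subseteq I_y^{(1)}\}$, so $Y_2$ is locally derivable from $Y_1$, giving the full equivalence of Lemma \ref{lem.ChangeofWindow1}; this is what is needed for the one positive statement, (C2)$\Rightarrow$(C1), where the cubical set built from a Littlewood counterexample passes its LR property to the canonical set. If you reorganize your argument around this asymmetry --- one-directional transfer in general, two-directional transfer only under the coordinate-line hypothesis (in particular in codimension one) --- the proof goes through; as written, with a symmetric window-sandwiching comparison, it does not.
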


This paper is organized as follows: In Section \ref{sec.Prelim} we will give details and definitions of our objects of study, and we will explain relevant results from previous work, laying the groundwork for proofs in subsequent sections. In Sections \ref{sec.LRThm1}-\ref{sec.LR'Thms2,3} we will present the proofs of our results about cubical cut and project sets. In Section \ref{sec.Canonical} we will present the proof of Theorem \ref{thm.Canonical}. In Section \ref{sec.AltChoice} we will discuss a natural alternate choice of shapes which can be considered in place of $\mc{C}_d$, the collection $\mc{C}_d'$ of convex sets with inradius at least $1/2$. The proofs of our results about $\mc{C}_d$ do not extend immediately to $\mc{C}_d'$, and this raises an interesting open problem which has strong connections to Diophantine approximation.

\subsection{Notation}\label{sec.Notation}
For sets $A$ and $B$, the notation $A\times B$ denotes the Cartesian product. If $A$ and $B$ are subsets of the same Abelian group, then $A+B$ denotes the collection of all elements of the form $a+b$ with $a\in A$ and $b\in B$.

For $x\in\R,~\{x\}$ denotes the fractional part of $x$ and
$\| x \|$ denotes the distance from $x$ to the nearest integer. For
$x\in\R^m$, we set
$|x|=\max\{|x_1|,\ldots ,|x_m|\}$ and
$\|x\|=\max\{\|x_1\|,\ldots ,\|x_m\|\}.$ We use the symbols $\ll, \gg,$ and $\asymp$ for the standard Vinogradov and asymptotic notation.

\section{Preliminary results}\label{sec.Prelim}

\subsection{Cut and project sets}\label{sec.CutAndProj}
For the most part, we are using the same setup as in \cite{HaynKoivWalt2015a}. However, for completeness and to avoid confusion, we provide all of our definitions here. Let $E$ be a $d$-dimensional subspace of $\R^k$, and $F_\pi\subseteq\R^k$ a subspace complementary to $E$. Write $\pi$ for the projection onto $E$ with respect to the decomposition $\R^k=E+F_\pi$. Choose a set $\mc{W}_\pi\subseteq F_\pi$, and define $\mc{S}=\mc{W}_\pi+E$. The set $\mc{W}_\pi$ is referred to as the {\bf window}, and $\mc{S}$ as the {\bf strip}. For each $s\in\R^k/\Z^k,$ we define the {\bf cut and project set} $Y_s\subseteq E$ by
\[Y_s=\pi(\mc{S}\cap(\Z^k+s)).\]
In this situation we refer to $Y_s$ as a {\bf $\mathbf{k}$ to $\mathbf{d}$ cut and project set}.

We adopt the conventional assumption that $\pi|_{\Z^k}$ is injective. We also assume in much of what follows that $E$ is a {\bf totally irrational} subspace of $\R^k$, which means that the canonical projection of $E$ into $\R^k/\Z^k$ is dense. There is little loss of generality in this assumption, since any subspace of $\R^k$ is dense in some rational sub-torus of $\R^k/\Z^k$.

For the problem of studying LR, the $s$ in the definition of $Y_s$ plays only a minor role. If we restrict our attention to points $s$ for which $\Z^k+s$ does not intersect the boundary of $\mc{S}$ (these are called {\bf regular} points) then, as long as $E$ is totally irrational, the sets of finite patches in $Y_s$ do not depend on the choice of $s$. In particular, the property of being LR with respect to some collection of sets does not depend on the choice of $s$, as long as $s$ is taken to be a regular point. On the other hand, for points $s$ which are not regular, the cut and project set $Y_s$ may contain `additional' patches coming from points on the boundary, which will make it non-repetitive, and therefore not LR, but for superficial reasons. For this reason, {\it we will always assume that $s$ is taken to be a regular point, and we will often simplify our notation by writing $Y$ instead of $Y_s$}.

As a point of reference, when allowing $E$ to vary, we also make use of the fixed subspace $F_\rho=\{0\} \times \R^{k-d}\subseteq \R^k$, and we define $\rho:\R^k\rar E$ and $\rho^*:\R^k\rar F_\rho$ to be the projections onto $E$ and $F_\rho$ with respect to the decomposition $\R^k=E+F_\rho$ (recall that we are assuming $E$ is totally irrational). Our notational use of $\pi$ and $\rho$ is intended to be suggestive of the fact that $F_\pi$ is the subspace which gives the {\em projection} defining $Y$ (hence the letter $\pi$), while $F_\rho$ is the subspace with which we {\em reference} $E$ (hence the letter $\rho$). We write $\mc{W}=\mc{S}\cap F_\rho$, and for convenience we also refer to this set as the {\bf window} defining $Y$. This slight ambiguity should not cause any confusion in the arguments below.

For some problems about cut and project sets (e.g. the deformation properties considered in \cite{HaynKellWeis2014}) we are able to present interesting results with very weak assumption on the window $\mc{W}$. However, for problems about regularity of patterns, small pathologies in the window lead to sparse but erratic behavior in the corresponding cut and project sets. The property of being LR is quite restrictive and, in order for it to hold, it is necessary that the window be compatible with the lattice $\Z^k$ in some way. Therefore in much of this paper we will focus our attention on the situation where $\mc{W}$ is taken to be a {\bf cubical window}, given by
\begin{equation}\label{eqn.SquareWindow}
\mc{W}=\left\{\sum_{i=d+1}^{k}t_ie_i:0\le t_i<1\right\}.
\end{equation}
In Section \ref{sec.Canonical} we will also consider the case when $\mc{W}$ is taken to be a {\bf canonical window}, i.e. the $(k-d)$-dimensional polytope which is the image under $\rho^*$ of the unit cube in $\R^k$.

For any cut and project set, the collection of points $x\in E$ with the property that $Y+x=Y$ forms a group, called the {\bf group of periods} of $Y$. We say that $Y$ is {\bf aperiodic} if the group of periods is $\{0\}$. Finally, as mentioned in the introduction, we say that $Y$ is a {\bf cubical} (resp. {\bf canonical}) {\bf cut and project set} if it is regular, totally irrational, and aperiodic, and if $\mc{W}$ is a cubical (resp. canonical) window.

If $E$ is totally irrational, we can write it as the graph of a linear function with respect to the standard basis vectors in $F_\rho$. In other words,
\begin{equation*}
E = \{(x,L(x)): x \in \R^d\},
\end{equation*}
where $L: \R^d \to \R^{k-d}$ is a linear function. For each $1\le i\le k-d$, we define the linear form $L_i:\R^d \to \R$ by
\[L_i(x) = L(x)_{i} =\sum_{j=1}^d \alpha_{ij} x_j,\]
and we use the points
$\{\alpha_{ij}\}\in\R^{d(k-d)}$ to parametrize the choice of $E$.

\subsection{Diophantine approximation and transference}\label{sec.Approx} Dirichlet's Theorem in Diophantine approximation says that, for any real number $\alpha$, and for any $N\in\N$,
\[\min_{1\le n\le N}\|n\alpha\|\le (N+1)^{-1}.\]
An immediate corollary of this is that
\[\liminf_{n\rar\infty}n\|n\alpha\|\le 1.\]
It follows from a theorem of Borel and Bernstein (or Khintchine's Theorem, which gives a stronger result) that, for Lebesgue almost every $\alpha$,
\[\liminf_{n\rar\infty}n\|n\alpha\|=0.\]
On the other hand, it is a theorem of Jarnik that the set of $\alpha$ for which
\[\liminf_{n\rar\infty} n\|n\alpha\|>0,\]
is a set of Hausdorff dimension $1$.

The Littlewood conjecture, proposed by J.~E.~Littlewood, is the conjecture that, for every pair of real numbers $\alpha$ and $\beta$, we have that
\[\liminf_{n\rar\infty}n\|n\alpha\|\|n\beta\|=0.\]
Important advances in the understanding of the Littlewood conjecture have been made by several authors, including Cassels and Swinnerton-Dyer \cite{CassSwin1955}, Pollington and Velani \cite{PollVela2000}, and Badziahin, Pollington, and Velani \cite{BadzPollVela2011}. The metric (a.e.) theory of this problem is well understood, thanks largely to the work of Gallagher \cite{Gall1962} (see also \cite{BereHaynVela2015}), and it is also known, due to results of Einsiedler, Katok, and Lindenstrauss \cite{EinsKatoLind2006},  that the set of $(\alpha,\beta)\in\R^2$ which do not satisfy the Littlewood conjecture is a set of Hausdorff dimension $0$. However the original conjecture remains an open problem.

For $m\ge 2$, we will call the $m$-dimensional Littlewood conjecture the assertion that, for any $\alpha_1,\ldots ,\alpha_m\in\R$
\[\liminf_{n\rar\infty}n\|n\alpha_1\|\cdots \|n\alpha_m\|=0.\]
Analogues of most of the above mentioned results exist for $m>2$, although the boundary of what is known is not significantly different for larger $m$ than it is for the $m=2$ problem.

In the proofs of our main results we will use the following `dual' form of the above problems.
\begin{lemma}\label{lem.LCDualForm}
Suppose that $m\ge 1$. The number $(\alpha_1,\ldots ,\alpha_m)\in\R^m$ satisfies
\[\liminf_{n\rar\infty}n\|n\alpha_1\|\cdots \|n\alpha_m\|=\epsilon,\]
for some $\epsilon>0$, if and only if there exists a constant $c>0$ such that, for all nonzero integers $n\in\Z^m$,
\[\|n_1\alpha_1+\cdots +n_m\alpha_m\|> \frac{c}{(1+|n_1|)\cdots (1+|n_m|)}.\]
Furthermore the constant $c$ can be made to depend only on $\epsilon$, and not on $(\alpha_1,\ldots ,\alpha_m)$.
\end{lemma}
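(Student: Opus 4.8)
The plan is to prove Lemma \ref{lem.LCDualForm}, which is a transference-type equivalence between the multiplicative Littlewood quantity $\liminf_n n\|n\alpha_1\|\cdots\|n\alpha_m\|$ and a lower bound on the dual inhomogeneous form $\|n_1\alpha_1+\cdots+n_m\alpha_m\|$. The strategy is to prove the two implications separately, and in each direction to convert between a statement about a single integer $n$ weighted by $n\mapsto n\prod\|n\alpha_i\|$ and a statement about an integer vector $(n_1,\ldots,n_m)$ weighted by the product $\prod(1+|n_i|)$.

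First I would prove the contrapositive of the ``only if'' direction, which is the more natural starting point. Suppose that for every $c>0$ there is a nonzero $n\in\Z^m$ with $\|n_1\alpha_1+\cdots+n_m\alpha_m\|\le c/\prod_i(1+|n_i|)$. I want to produce integers $N$ with $N\prod_i\|N\alpha_i\|$ arbitrarily small. The key device is the geometry of numbers / Minkowski's theorem applied to a suitable box. Given a large parameter $Q$, consider the convex symmetric region in $\R^{m+1}$ defined by $|N|\le Q$ and $|N\alpha_i - p_i|\le (\text{something like } Q^{-1/m})$ for integers $p_i$; a volume computation shows Minkowski's theorem yields a nonzero lattice point, giving one integer $N\le Q$ with all $\|N\alpha_i\|$ simultaneously small, so that $N\prod\|N\alpha_i\|\ll 1$. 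To push the $\liminf$ all the way to $0$ rather than merely $\le 1$, I would instead feed the hypothesized good dual vectors $n$ into the standard transference inequalities (Khintchine's transference principle), which directly relate the exponents of approximation for the linear form $\sum n_i\alpha_i$ and for the simultaneous system $(\|N\alpha_i\|)_i$. The weighted/multiplicative refinement I need is essentially the product version of transference; the cleanest route is to set $Q_i = 1+|n_i|$, observe $\prod Q_i$ plays the role of the ``height'' of the dual point, and build a box with side lengths tuned to the $Q_i$ so that Minkowski produces an $N\asymp\prod Q_i$ with $\prod\|N\alpha_i\|$ of size controlled by $c$. Letting $c\to0$ then forces the $\liminf$ to $0$.

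For the converse (``if'' direction) I would again argue contrapositively: assume $\liminf_n n\prod_i\|n\alpha_i\|=0$, and produce, for every $c>0$, a nonzero dual vector violating the lower bound. Given $N$ with $N\prod_i\|N\alpha_i\|$ small, pick for each $i$ the integer $p_i$ nearest to $N\alpha_i$ and use a pigeonhole/continued-fraction style argument to convert the single small product into a single dual relation: the quantities $\|N\alpha_i\|$ being small means each $\alpha_i$ is close to a rational with denominator dividing $N$, and multiplying through lets one find integers $n_i$ with $1+|n_i|\asymp 1/\|N\alpha_i\|$ (up to the balancing) so that $\|\sum n_i\alpha_i\|$ is comparable to $N\prod_i\|N\alpha_i\|/\prod_i(1+|n_i|)$, which is smaller than $c/\prod_i(1+|n_i|)$ once the product is small enough. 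The final clause of the lemma, that $c$ depends only on $\epsilon$ and not on $(\alpha_1,\ldots,\alpha_m)$, follows because every estimate above is uniform: the Minkowski volume constants and the pigeonhole balancing depend only on $m$, so tracking the dependence of $c$ on $\epsilon$ through the two directions gives an explicit (dimension-dependent but $\alpha$-independent) relationship.

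The main obstacle I expect is the bookkeeping in the \emph{multiplicative} (weighted) transference, as opposed to the classical sup-norm transference. In the usual Khintchine transference one balances a single height $Q$ against a single approximation scale; here the weights $\prod_i(1+|n_i|)$ and $\prod_i\|n\alpha_i\|$ are genuinely multiplicative, so the Minkowski box must have $m$ independent side lengths $Q_1,\ldots,Q_m$ chosen so that their product matches the target while each individual coordinate constraint stays feasible. Getting the box to have volume $\ge 2^{m+1}$ (so Minkowski applies) while simultaneously forcing the product $\prod\|N\alpha_i\|$ to be controlled, and then verifying that the resulting $N$ and dual vectors are nonzero and satisfy the claimed comparabilities, is where the care is needed. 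I would handle this by choosing the side lengths as explicit monomials in the $|n_i|$ (or in $1/\|N\alpha_i\|$) and checking the volume and feasibility inequalities directly, keeping all implied constants as functions of $m$ alone so the uniformity in the final clause is transparent.
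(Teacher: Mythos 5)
Your reading of the statement is correct --- it is a multiplicative transference principle --- and your overall plan, proving both implications by geometry of numbers, is the right family of ideas. For context, the paper does not prove this lemma from scratch: its entire proof is that the case $m=1$ is obvious (for $n\in\Z$, $n\neq 0$, the dual inequality is exactly the badly approximable property of $\alpha_1$), while for $m\ge 2$ it simply cites Mahler's transference theorem, pointing to the appendix of Badziahin--Pollington--Velani and Lemma~1 of Beresnevich. So you are attempting a self-contained re-proof of the cited black box; that is legitimate, but as written your sketch has a genuine gap in each direction.

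The serious gap is in the direction ``good dual vector $\Rightarrow$ $N\prod_i\|N\alpha_i\|$ small''. You propose to apply Minkowski's first theorem to a box in $\R^{m+1}$ of volume at least $2^{m+1}$ with side lengths tuned to $Q_i=1+|n_i|$. This cannot work: for the lattice $\{(N,N\alpha_1-q_1,\ldots,N\alpha_m-q_m)\}$ and a box $\{|N|\le T,\ |N\alpha_i-q_i|\le\epsilon_i\}$, Minkowski needs $T\prod_i\epsilon_i\ge 1$, and the hypothesis --- the existence of the good vector $n$ --- never enters such an application, so no tuning of side lengths can yield anything beyond the Dirichlet bound $N\prod_i\|N\alpha_i\|\ll 1$, which holds for \emph{every} $\alpha$ and gives no decay as $c\to 0$. (Khintchine's transference principle does not fill the hole either: it controls exponents of approximation, which are too coarse to detect the multiplicative structure at stake.) The missing idea, which is the actual content of Mahler's proof, is to apply Minkowski inside the rank-$m$ sublattice $\Lambda_a=\{y\in\Z^{m+1}: y\cdot a=0\}$, where $a=(n_1,\ldots,n_m,p)$ is the good dual vector, taken primitive, with $\theta:=\|n_1\alpha_1+\cdots+n_m\alpha_m\|=|n_1\alpha_1+\cdots+n_m\alpha_m+p|>0$ and, say, $|\alpha_i|\le\tfrac12$. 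For $y=(q_1,\ldots,q_m,N)\in a^{\perp}$ one has the identity $N(n_1\alpha_1+\cdots+n_m\alpha_m+p)=-\sum_i n_i(q_i-N\alpha_i)$, and this is what converts the smallness of $\theta$ into usable information: the section $\{y\in a^{\perp}: |q_i-N\alpha_i|\le\epsilon_i\}$ has $m$-dimensional volume at least $2^m\prod_i\epsilon_i\cdot\max_i|n_i|/\theta$, while $\det\Lambda_a\ll_m\max_i|n_i|$, so Minkowski applies in $\Lambda_a$ as soon as $\prod_i\epsilon_i\gg_m\theta$. Choosing $\epsilon_i=t/Q_i$ with $t^m\asymp_m\theta\, Q_1\cdots Q_m\le c$ then produces a nonzero $(q,N)$ with $N\neq 0$, $\prod_i\|N\alpha_i\|\ll_m\theta$ and $|N|\le mt/\theta$, whence $N\prod_i\|N\alpha_i\|\ll_m t\ll_m c^{1/m}$. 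One must also treat separately the degenerate case $\theta=0$ (a rational relation, where the liminf vanishes by a direct Dirichlet argument in the hyperplane), and note that the produced $N$ are unbounded (or some $\alpha_i$ is rational), so the liminf over $n\to\infty$ is genuinely $0$.

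The other direction has a quantitative error, although the pigeonhole skeleton is right. Write $N\alpha_i=p_i+\theta_i'$ with $\theta_i=|\theta_i'|=\|N\alpha_i\|$ and $\delta=N\prod_i\theta_i$. Pigeonholing the residues $\sum_i n_ip_i \pmod{N}$ over a box $\{|n_i|\le c_i\}$ with $\prod_i c_i\gtrsim N$ gives a nonzero $n$ with $\sum_i n_ip_i\equiv 0\pmod{N}$, hence $\|\sum_i n_i\alpha_i\|\le\sum_i|n_i|\theta_i/N$; this triangle-inequality bound is all the construction provides, not a comparability. Your choice $1+|n_i|\asymp 1/\theta_i$ then yields $\prod_i(1+|n_i|)\cdot\|\sum_i n_i\alpha_i\|\ll (N/\delta)\cdot(m/N)=m/\delta$, which \emph{blows up} as $\delta\to 0$, and the asserted relation $\|\sum_i n_i\alpha_i\|\asymp N\prod_i\theta_i/\prod_i(1+|n_i|)$ would require cancellation in $\sum_i n_i\theta_i'$ that nothing guarantees. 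The correct balancing is $c_i\asymp\delta^{1/m}/\theta_i$: then $\prod_i c_i\asymp N$ (so the pigeonhole still applies), $\prod_i(1+|n_i|)\ll_m N$ and $\|\sum_i n_i\alpha_i\|\ll_m\delta^{1/m}/N$, giving $\prod_i(1+|n_i|)\cdot\|\sum_i n_i\alpha_i\|\ll_m\delta^{1/m}$; coordinates with $\theta_i>\delta^{1/m}$ must be discarded and the argument run in the remaining coordinates (cleanest by induction on $m$). Letting $\delta\to 0$ finishes this direction, and since every constant in both directions depends only on $m$, the uniformity clause --- $c$ a function of $\epsilon$ alone --- follows as you indicate. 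With these two repairs your outline becomes a complete proof, i.e.\ a reconstruction of the theorem of Mahler that the paper merely cites.
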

\begin{proof}
For $m=1$ this is obvious. For $m\ge 2$ it follows directly from the results of Mahler in \cite{Mahl1939}. See also \cite[Appendix]{BadzPollVela2011} and \cite[Lemma 1]{Bere2014}.
\end{proof}
We will also use a transference principle which allows us to go from a potential counterexample to the $m$-dimensional Littlewood conjecture, to a corresponding inhomogeneous problem for aligned boxes.
\begin{lemma}\label{lem.LCInhomogForm}
For $m\ge 2$, if $(\alpha_1,\ldots,\alpha_m)$  is a counterexample to the $m$-dimensional Littlewood conjecture then there is a constant $C>0$, with the property that, for any $N_1,\ldots ,N_m\in\N$, the collection of points
\[\left\{\{n_1\alpha_1+\cdots +n_m\alpha_m\}:|n_i|\le N_i\right\}\]
is $C/(N_1\cdots N_m)$-dense in $\R/\Z$. If $m=1$ and $\alpha_1$ is a badly approximable number then this statement is also true.
\end{lemma}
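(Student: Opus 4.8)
### Plan for proving Lemma \ref{lem.LCInhomogForm}

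The plan is to deduce the inhomogeneous statement from the homogeneous counterexample hypothesis using a transference principle. The hypothesis that $(\alpha_1,\ldots,\alpha_m)$ is a counterexample to the $m$-dimensional Littlewood conjecture means precisely that $\liminf_{n\rar\infty} n\|n\alpha_1\|\cdots\|n\alpha_m\| = \epsilon > 0$. By Lemma \ref{lem.LCDualForm}, this is equivalent to a lower bound: there is a constant $c>0$ so that for all nonzero $n\in\Z^m$,
\[
\|n_1\alpha_1+\cdots+n_m\alpha_m\| > \frac{c}{(1+|n_1|)\cdots(1+|n_m|)}.
\]
This dual form is exactly the kind of spacing estimate that drives density arguments for the set of points $\{n_1\alpha_1+\cdots+n_m\alpha_m\}$.

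The core of the argument I would carry out is a Fourier-analytic or pigeonhole transference. \textbf{Fourier approach:} Fix $N_1,\ldots,N_m$ and let $P=\{\{n_1\alpha_1+\cdots+n_m\alpha_m\}:|n_i|\le N_i\}\subseteq\R/\Z$, a multiset of size $\asymp N_1\cdots N_m$. To show $P$ is $C/(N_1\cdots N_m)$-dense, I would convolve the counting measure of $P$ with a Fejér-type kernel concentrated at scale $\delta=C/(N_1\cdots N_m)$ and show the resulting smoothed density is positive everywhere. Expanding the relevant exponential sum over $P$, the off-diagonal Fourier coefficients involve $\sum_{|n_i|\le N_i} e(h(n_1\alpha_1+\cdots+n_m\alpha_m))$ for integers $h\ne 0$; these factor as products of one-dimensional geometric sums $\sum_{|n_i|\le N_i} e(hn_i\alpha_i)$, each of which is $\ll \min(N_i, \|h\alpha_i\|^{-1})$. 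The dual lower bound above is then used to control $\|h\alpha_i\|$ from below, ensuring the total error contribution stays below the main term $\asymp N_1\cdots N_m \cdot \delta$, which forces positivity and hence density. \textbf{Pigeonhole alternative:} Alternatively, one partitions $\R/\Z$ into intervals of length $\delta$ and uses the dual inequality to show no interval can be empty, by arguing that two nearby points $n_1\alpha_1+\cdots$ and $n_1'\alpha_1+\cdots$ differing by less than $\delta$ would violate the spacing bound unless the $n_i'-n_i$ are forced into a range that still sweeps out every interval.

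The $m=1$ case I would handle separately and more directly: if $\alpha_1$ is badly approximable, then $\|n\alpha_1\|\gg 1/|n|$, and the three-distance (Steinhaus) theorem gives that $\{n\alpha_1 : |n|\le N_1\}$ has all gaps $\asymp 1/N_1$, which yields $C/N_1$-density immediately.

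The main obstacle I expect is the bookkeeping in the Fourier argument when the $N_i$ are wildly unequal and the frequency $h$ ranges over all nonzero integers: the geometric-sum bounds $\min(N_i,\|h\alpha_i\|^{-1})$ interact with the dual Diophantine inequality in a way that requires carefully splitting the sum over $h$ according to the relative sizes of the $\|h\alpha_i\|$, and verifying that the uniform constant $c$ from Lemma \ref{lem.LCDualForm} propagates to a uniform constant $C$ independent of the particular $N_1,\ldots,N_m$. Keeping the dependence on $\epsilon$ (and not on the specific $\alpha_i$) under control throughout is the delicate point, though it should follow since Lemma \ref{lem.LCDualForm} already guarantees $c$ depends only on $\epsilon$.
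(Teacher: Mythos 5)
Your first step (passing to the dual inequality via Lemma \ref{lem.LCDualForm}) coincides with the paper's, but both of your proposed continuations for $m\ge 2$ have genuine gaps. In the Fourier argument the main term is $\hat{K}(0)\prod_i(2N_i+1)\asymp C$, while the error you propose to beat by the triangle inequality is $\sum_{0<|h|\le H}|\hat{K}(h)|\prod_i\min\bigl(N_i,\|h\alpha_i\|^{-1}\bigr)$, and with the bounds you quote this error is of size at least a constant times $\log(N_1\cdots N_m)$, not $O(1)$. Already for $m=1$: bad approximability (which is what the dual bound gives for each $\alpha_i$ separately) forces the values $\|h\alpha\|$, $h\sim M$, to be spread out like $j/M$ for $1\le j\ll M$, so each dyadic block contributes $\asymp M\log M$ to $\sum_h\min(N,\|h\alpha\|^{-1})$; multiplying by $|\hat{K}(h)|$, which for the low frequencies carrying the kernel's mass is comparable to $\hat{K}(0)\asymp\delta$, and summing over blocks $M\le 1/\delta$, the top scale alone gives $\asymp\delta\cdot\delta^{-1}\log\delta^{-1}=\log(N/C)$. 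This harmonic-sum loss is exactly the logarithm in Erd\H{o}s--Tur\'an-type discrepancy estimates; it means your argument certifies density only at scale $\log(N_1\cdots N_m)/(N_1\cdots N_m)$, i.e.\ with a ``constant'' $C$ that grows with the $N_i$, which is precisely what the lemma forbids. The pigeonhole alternative fails for a different reason: the dual inequality is a \emph{separation} statement (distinct points of your set lie at distance $>c/\prod_i(2N_i+1)$ from one another), and separation does not imply density --- a $c/\prod_i(2N_i+1)$-separated set of $\prod_i(2N_i+1)$ points can perfectly well lie entirely in an interval of length $1/2$ when $c<1/2$. The step ``no interval can be empty because two nearby points would violate the spacing bound'' is backwards: spacing forbids points from being too close; it never forces an interval to be occupied.

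What is missing is a transference step converting homogeneous non-solubility into inhomogeneous solubility, and this is exactly how the paper argues. With $c$ from the dual bound, it introduces the $m+1$ linear forms
\begin{align*}
f_1(x)&=(N_1\cdots N_m/c)\cdot(x_1\alpha_1+\cdots+x_m\alpha_m+x_{m+1}),\\
f_{i+1}(x)&=x_i/N_i\qquad(1\le i\le m),
\end{align*}
notes that the dual bound says precisely that $\max_i|f_i(n)|<1$ has no nonzero solution $n\in\Z^{m+1}$, and invokes Cassels' inhomogeneous transference theorem \cite[Section V, Theorem V]{Cass1957}: every $\gamma\in\R^{m+1}$ is then approximated by a lattice point to within $\frac{1}{2}(1/c+1)$ in each form simultaneously. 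Unwinding $f_1$ and rescaling the $N_i$ by the bounded factor $\frac{1}{2}(1/c+1)$ yields $C/(N_1\cdots N_m)$-density with $C$ depending only on $c$ (hence only on $\epsilon$), uniformly in $N_1,\ldots,N_m$. Your $m=1$ argument via the three-distance theorem is fine, and is essentially the case the paper quotes directly from Cassels; but for $m\ge2$ the sharp, uniform statement requires this geometry-of-numbers transference (or an equivalent), not harmonic analysis.
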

\begin{proof}
For $m=1$ this is precisely \cite[Section V, Theorem VI]{Cass1957}, and for $m\ge 2$  it is a modification of the proof of that theorem. For completeness we provide the details of the argument.

If $(\alpha_1,\ldots ,\alpha_m)$ is a counterexample to the $m$-dimensional Littlewood conjecture then by Lemma \ref{lem.LCDualForm} there is a constant $c>0$ such that, for any $N_1,\ldots ,N_m\in\N$, and for any nonzero $n\in\Z^m$ with $|n_i|\le N_i$ for all $i$, we have that
\[\|n_1\alpha_1+\cdots +n_m\alpha_m\|>\frac{c}{N_1\cdots N_m}.\]
For $1\le i\le m+1,$ define linear forms $f_i:\R^{m+1}\rar\R$ by
\begin{align*}
  f_1(x)&=(N_1\cdots N_m/c)\cdot (x_1\alpha_1+\cdots +x_m\alpha_m+x_{m+1}),\\
  f_2(x)&=x_1/N_1, ~f_3(x)=x_2/N_2,~\ldots ~,~ f_{m+1}(x)=x_m/N_m.
\end{align*}
The matrix defining these forms has determinant $\pm1/c$, and there is no nonzero $n\in\Z^{m+1}$ for which
\[\max_i|f_i(n)|<1.\]
Therefore, by \cite[Section V, Theorem V]{Cass1957}, for every $\gamma\in\R^{m+1}$, there is an integer $n\in\Z^{m+1}$ for which
\[\max_i|f_i(n)-\gamma_i|<\frac{1}{2}\left(\frac{1}{c}+1\right).\]
It is clear from this that we can choose $C$ so that it satisfies the claim in the statement of the lemma.
\end{proof}

\subsection{Patterns and regular points}\label{sec.Prelim.Patterns}
For $y\in Y_s$ we will use the notation $\tilde{y}$ to denote the point in $\Z^k$ which satisfies $\pi (\tilde{y}+s)=y$. Since $\pi|_{\Z^k}$ is injective, this point is uniquely defined.

In our discussion in the introduction we referred to the shapes in the collection $\mc{A}$, as well as the regions in which we search for them in our two notions of repetitivity, as subsets of $\R^d$. It is necessary to be more precise, since we are actually working in $\R^k$, so we will make the convention that these sets are taken to be subsets of $F_\rho^\perp=\langle e_1,\ldots ,e_d\rangle_\R$. The definitions of $\mc{C}_d$ and $\mc{R}_d$ can then be read exactly as before. From the point of view of working within $E$, all of these sets can be thought of as the corresponding images under the map $\rho$.

For each $\Omega\in\mc{A}$ and for each $y\in Y$, we
define the {\bf patch of shape $\mathbf{\Omega}$ at $\mathbf{y}$}, by
\[P(y,\Omega):=\{ y' \in Y: \rho (\tilde{y'}-\tilde y) \in \rho(\Omega)\}.\]
In other words, $P(y,\Omega)$ consists of the projections (under $\pi$) to $Y$ of all points of $\mc{S}$ whose first $d$ coordinates are in a certain neighborhood, determined by $\Omega$ and the first $d$ coordinates of $\tilde y$. The reader may wish to see the discussion in \cite[Section 2.3]{HaynKoivWalt2015a} of how this relates to other existing notions in the literature of patterns in cut and project sets.

For $y_1,y_2\in Y$, we say that $P(y_1,\Omega)$ and
$P(y_2,\Omega)$ are equivalent if
\[P(y_1,\Omega)=P(y_2,\Omega)+y_1-y_2.\]
This defines an equivalence relation on the collection of patches of shape $\Omega$. We denote the equivalence class of the patch of shape $\Omega$ at $y$ by $\mc{P}(y,\Omega)$. Note that it is possible for two patches which are translates of each other, as point sets, to fall in different equivalence classes. This highlights the importance of the role of $y$, the {\bf distinguished point}, in the definition of $P(y,\Omega)$.

There is a natural action of $\Z^k$ on $F_\rho$, given by
\[n.w=\rho^*(n)+w = w + (0,n_2-L(n_1)),\]
for $n=(n_1,n_2)\in\Z^k = \Z^d \times
\Z^{k-d}$ and $w\in F_\rho$. For each $\Omega\in\mc{A}$ we define the
{\bf $\mathbf{\Omega}$-singular points} of $\mc{W}$ by
\[\mathrm{sing}(\Omega):=\mc{W}\cap\left((-(\rho^{-1}\circ\rho)(\Omega)\cap\Z^k). \partial\mc{W}\right),\]
and the {\bf $\mathbf{\Omega}$-regular points} by
\[\mathrm{reg}(\Omega):=\mc{W}\setminus\mathrm{sing}(\Omega).\]
The singular points are just the translates of the boundary of $\Omega$ under the natural action of the (negatives of) the collection of integer points in $\Z^k$ whose first $d$ coordinates lie in $\Omega$. The following result follows from the proof of \cite[Lemma 3.2]{HaynKoivSaduWalt2015} (see also \cite{Juli2010}).
\begin{lemma}\label{lem.ConnComp}
Suppose that $\mc{W}$ is a parallelotope
generated by integer vectors, and suppose that $\Omega\in\mc{A}$ is a convex set with non-empty interior. For every equivalence class $\mc{P}=\mc{P}(y,\Omega)$, there is a unique connected component $U$ of $\mathrm{reg}(\Omega)$ with the property that, for any $y'\in Y_s$,
\[\mc{P}(y',\Omega)=\mc{P}(y,\Omega)~\text{ if and only if }~ \rho^*(\tilde{y'}+s)\in U.\]
\end{lemma}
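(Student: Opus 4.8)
The plan is to pass to the internal-space picture of \cite{Juli2010} and reduce the statement to a combinatorial fact about a single finite set attached to each window coordinate. For $y\in Y_s$ write $w:=\rho^*(\tilde y+s)\in\mc{W}$ for its window coordinate, and to each $w\in\mc{W}$ associate the finite set
\[N(w):=\left\{n\in(\rho^{-1}\circ\rho)(\Omega)\cap\Z^k:\ n.w\in\mc{W}\right\}.\]
Writing a point of the patch as $y'=\pi(\tilde y+n+s)=y+\pi(n)$ with $n=\tilde{y'}-\tilde y$, one checks that $y'\in P(y,\Omega)$ precisely when $\rho(n)\in\rho(\Omega)$ and $w+\rho^*(n)=n.w\in\mc{W}$; hence $P(y,\Omega)=y+\pi(N(w))$. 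Since $\pi|_{\Z^k}$ is injective, the class $\mc{P}(y,\Omega)$ is determined by $\pi(N(w))$, and hence by $N(w)$ itself, so that for $w'=\rho^*(\tilde{y'}+s)$ we have
\[\mc{P}(y',\Omega)=\mc{P}(y,\Omega)\quad\Longleftrightarrow\quad N(w')=N(w).\]
Thus the lemma is equivalent to the assertion that the level sets of the map $w\mapsto N(w)$ are exactly the connected components of $\mathrm{reg}(\Omega)$.

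First I would prove that $N$ is locally constant on $\mathrm{reg}(\Omega)$. The membership $n.w\in\mc{W}$ of a fixed relevant $n$ can change, as $w$ varies, only when $n.w=w+\rho^*(n)$ crosses $\partial\mc{W}$, that is, when $w\in(-n).\partial\mc{W}$; ranging over all $n\in(\rho^{-1}\circ\rho)(\Omega)\cap\Z^k$, the set of such $w$ is precisely $\mathrm{sing}(\Omega)$. Hence $N$ is constant near every point of $\mathrm{reg}(\Omega)$, and therefore constant on each connected component. Regularity of $s$ guarantees that $w=\rho^*(\tilde y+s)$ itself lies in $\mathrm{reg}(\Omega)$: for every $n\in\Z^k$ the point $\tilde y+n+s\in\Z^k+s$ avoids $\partial\mc{S}$, so $w+\rho^*(n)\notin\partial\mc{W}$. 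Letting $U$ be the component of $\mathrm{reg}(\Omega)$ containing this $w$, the implication $w'\in U\Rightarrow N(w')=N(w)$ already gives one direction of the equivalence, and uniqueness of $U$ then follows formally once the two-sided statement is known.

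The converse direction is the crux, and I expect it to be the main obstacle: I must show that distinct components of $\mathrm{reg}(\Omega)$ carry distinct sets $N$, equivalently that each level set $\{w:N(w)=N_0\}$ is connected. The essential tool is convexity in the internal space. The super-level set $\{w:N(w)\supseteq N_0\}=\bigcap_{n\in N_0}(\mc{W}-\rho^*(n))$ is an intersection of translates of the convex parallelotope $\mc{W}$, hence convex; consequently, along the segment joining any two points of one level set, every $n\in N_0$ stays inside $\mc{W}$. What must be excluded is that some relevant $n\notin N_0$ forces the segment out of the level set so as to split it into two regions with the same $N_0$. Here I would use the convexity of $\Omega$ together with the parallelotope structure of $\mc{W}$ to show that the removed convex pieces $\mc{W}-\rho^*(n)$ cannot disconnect $\{w:N(w)\supseteq N_0\}$, so that the level set meets exactly one component of $\mathrm{reg}(\Omega)$; this is the content extracted from the proof of \cite[Lemma 3.2]{HaynKoivSaduWalt2015}.

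Combining the two steps yields the lemma: for a given class $\mc{P}(y,\Omega)$ take $U$ to be the component of $\mathrm{reg}(\Omega)$ containing $w=\rho^*(\tilde y+s)$. Then $\mc{P}(y',\Omega)=\mc{P}(y,\Omega)$ holds if and only if $N(w')=N(w)$, which by the two steps above holds if and only if $w'$ lies in the level set of $N$ through $w$, namely $U$; and any component with this property must coincide with $U$, giving uniqueness.
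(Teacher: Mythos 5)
Your reduction to the internal space is correct and efficient: $P(y,\Omega)=y+\pi(N(w))$ with $w=\rho^*(\tilde y+s)$, so injectivity of $\pi|_{\Z^k}$ gives that two patches are equivalent exactly when $N(w')=N(w)$; regularity of $s$ places every orbit point in $\mathrm{reg}(\Omega)$; and $N$ is locally constant on $\mathrm{reg}(\Omega)$, which gives the implication (same component $\Rightarrow$ equivalent patches). The genuine gap is the converse, which you rightly call the crux but then do not prove: you propose that convexity of $\Omega$ and the parallelotope structure of $\mc{W}$ should show that the removed convex pieces $\mc{W}-\rho^*(n)$ ``cannot disconnect'' a super-level set of $N$, and you defer the details to \cite{HaynKoivSaduWalt2015}. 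That convexity principle is false for a general finite family of translates of $\mc{W}$, so no amount of detail-filling along that route can succeed. Concretely, take $k-d=2$, $\mc{W}=[0,1)^2$, and the two translates $T_1=[0.3,1.3)^2$ and $T_2=[-0.3,0.7)^2$: their boundaries meet $\mc{W}$ in the four segments $\{0.3\}\times[0.3,1)$, $[0.3,1)\times\{0.3\}$, $\{0.7\}\times[0,0.7]$, $[0,0.7]\times\{0.7\}$, and the two opposite corner squares $[0,0.3)\times(0.7,1)$ and $(0.7,1)\times[0,0.3)$ are then distinct connected components of the complement which carry the same (empty) membership set. So for an unstructured family, level sets of the membership map need not be connected, and the easy direction is all your argument actually establishes.

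What saves the actual statement is a structural feature that your sketch never invokes: the condition $\rho(n)\in\rho(\Omega)$ constrains only the first $d$ coordinates of $n$, so the relevant integer points come in complete fibers $(n_1,n_2)$ with $n_1\in\Omega\cap\Z^d$ fixed and $n_2\in\Z^{k-d}$ arbitrary; in particular the relevant translates of $\mc{W}$ come in complete lattice orbits, and a family like $\{T_1,T_2\}$ above can never arise. For fixed $n_1$, the union $\bigcup_{n_2\in\Z^{k-d}}\bigl(\partial\mc{W}-\rho^*(n_1,n_2)\bigr)$ is the boundary of a tiling of $F_\rho$ by translates of $\mc{W}$, i.e.\ a union of \emph{complete} hyperplanes parallel to the faces of $\mc{W}$. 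Hence $\mathrm{sing}(\Omega)$ is a finite union of full hyperplane slices of $\mc{W}$, the connected components of $\mathrm{reg}(\Omega)$ are (relatively open) sub-parallelotopes cut out by these slices, and $N(w)$ records exactly which side of each slice $w$ lies on: the cell of the $n_1$-grid containing $w$ determines which elements of the fiber over $n_1$ belong to $N(w)$, and conversely. This grid structure yields both implications of the lemma at once, and it --- not convexity of the translates, nor convexity of $\Omega$, which plays no role here --- is the actual content of the proof of \cite[Lemma 3.2]{HaynKoivSaduWalt2015} that the paper cites; since you defer precisely this point to that citation while mischaracterizing what it provides, the crux of the lemma remains unproved in your write-up.
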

This lemma is an important tool which will allow us to translate problems about patterns in $Y$ into the language of Diophantine approximation in $F_\rho$.

\section{Proof of Theorem \ref{thm.LRConvex}}\label{sec.LRThm1}
When $k-d>1,$ the result of Theorem \ref{thm.LRConvex} follows from Theorem \ref{thm.LRAlignedBoxes}, which will be proved in the next section. Therefore we will assume the validity of the second theorem (proved in the next section), and suppose that $k-d=1$ and that $d>1$ (for the $d=1$ case see the comments immediately preceding the statement of the theorem). In this case, the subspace $E$ is the graph of a single linear form in $d$ variables, which we write as
\[L(x)=\sum_{j=1}^d\alpha_jx_j.\]

Let $B(x,r)$ denote the sup-norm ball centered at $x\in\R^d$, of radius $r>0$. By basic geometric considerations (see \cite[Equation (4.1)]{HaynKoivSaduWalt2015}) there is a constant $c>0$ with the property that, for any $r>0$ and for any $y\in Y$, the collection of points $y'\in Y$ satisfying
\[y'-y\in\rho (B(0,r))\]
is a subset of the patch
\[P(y,B(0,r+c)).\]

For each $N\in\N$ and for each matrix $A\in\mathrm{SL}_d(\Z)$ let $\Omega_{A,N}\in\mc{C}_d$ be defined by
\[\Omega_{A,N}=A\cdot [-N,N]^d.\]
It follows from our comments in the previous paragraph that there is an $\eta>0$ with the property that, for any $C\ge 1$ and $y\in Y$, the collection of points $y'\in Y$ with \[y'-y\in\rho(B(0,(C|\Omega_{A,N}|)^{1/d}))\]
is a subset of
\[P(y,B(0,(\eta C|\Omega_{A,N}|)^{1/d})).\]
This region depends on $N$ but not on $A$ and, by Lemma \ref{lem.ConnComp}, the collection of patterns of shape $\Omega_{A,N}$ which we see in the region is determined precisely by the collection of connected components of $\mathrm{reg}(\Omega_{A,N})$ which intersect the set
\[O_N(y)=\{\rho^*(\tilde{y}+n+s):\tilde{y}+n+s\in\mc{S},|(n_1,\ldots ,n_d)|\le (\eta C |\Omega_{A,N}|)^{1/d} \}.\]
To elucidate this further, note that for each choice of $(n_1,\ldots ,n_d)\in\Z^d$, there is precisely one point $(n_{d+1},\ldots ,n_k)\in\Z^{k-d}$ with the property that $\tilde{y}+(n_1,\ldots ,n_k)+s\in\mc{S}$. The set $O_N(y)$ therefore represents the orbit in $\mc{W}$ (i.e. modulo $1$) of the initial point $y^*=\rho^*(\tilde{y}+s)$, under the action of the collection of points $n\in\Z^k$ with $|(n_1,\ldots ,n_d)|\le (\eta C |\Omega_{A,N}|)^{1/d}$.

By total irrationality, the collection of points $y^*$, for $y\in Y$, is dense in $\mc{W}$. Therefore, to show that $Y$ is not LR with respect to $\mc{C}_d$, it is sufficient to show that, for any $C\ge 1$, we can choose $A$ and $N$ as above so that there is some regular point in $\mc{W}$ whose orbit under the collection of integers mentioned in the previous paragraph does not intersect one of the connected components of $\mathrm{reg}(\Omega_{A,N})$.

The number of integer points in the orbit we are considering is bounded above by a constant multiple of $N^d$, where the constant depends on $C$ and $\eta$ but nothing else. Therefore we can always choose a component interval {\it of the orbit} which has length $>C'/N^d$, for some $C'>0$ depending on $C$ and $\eta$. Furthermore, as already remarked, we can choose $y\in Y$ to position the left endpoint of this component interval as close to any point in $\mc{W}$ as we like.

On the other hand we will show that, for fixed $N$, we can choose $A$ so that there is a connected component of $\mathrm{reg}(\Omega_{A,N})$ which is as small as we like. We have that
\begin{align*}
\mathrm{sing}(\Omega_{A,N})&=\left\{\{L(n)\}:n\in\Omega_{A,N}\cap\Z^d\right\}\\
&=\left\{\{(\alpha_1,\ldots ,\alpha_d)A\cdot n\} : n\in\Z^d, |n|\le N\right\}.
\end{align*}
Write $A=(a_{ij})$ and set
\[(\beta_1,\ldots ,\beta_d)=(\alpha_1,\ldots ,\alpha_d)A.\]
We claim that, as $A$ runs over $\mathrm{SL}_d(\Z)$, the values of $\beta_1$ are dense modulo $1$. To see why this is true, first notice that the aperiodicity of $Y$ implies that the numbers $1,\alpha_1,\ldots , \alpha_d$ are $\Q$-linearly independent. Therefore the collection of numbers
\[\left\{\sum_{i=1}^d\alpha_i a_i~:~ a\in\Z^d,~ \mathrm{gcd}(a_1,\ldots ,a_d)=1\right\}\]
is dense modulo $1$. The density of the values of $\{\beta_1\}$ then follows from the fact that any vector $a\in\Z^d$ with $\mathrm{gcd}(a_1,\ldots ,a_d)=1$ may be extended to a basis of $\Z^d$ (see \cite[Chapter 1, Section 3, Theorem 5]{GrubLekk1987}).

The points $0$ and $\beta_1$ are always elements of $\mathrm{sing}(\Omega_{A,N})$. Since we can choose $A$ to make $\beta_1$ as close to $0$ as we like, we can ensure that there is a component interval of $\mathrm{reg}(\Omega_{A,N})$ which has length $<C'/N^d$. These observations together complete the proof that $Y$ is not LR with respect to $\mc{C}_d'$.

\section{Proof of Theorem \ref{thm.LRAlignedBoxes}}\label{sec.LRThm2}
For the proof of Theorem \ref{thm.LRAlignedBoxes} we will need to use the machinery developed in our classification of LR cut and project sets, i.e. the proof of \cite[Theorem 1.1]{HaynKoivWalt2015a}. Following the notation in Section \ref{sec.CutAndProj}, suppose that $Y$ is a $k$ to $d$ cubical cut and project set defined by linear forms $\{L_i\}_{i=1}^{k-d}$.

Assume first that $k-d>1$. For each $1\le i\le k-d$ define a map  $\mc{L}_i:\Z^d\rar\R/\Z$ by
\[\mc{L}_i(n)=L_i(n)~\mathrm{mod}~1,\]
and let $S_i\leqslant\Z^d$ denote the kernel of $\mc{L}_i$. Then for each $i$ define $\Lambda_i\leqslant\Z^d$ by
\[\Lambda_i=\bigcap_{\substack{j=1\\j\not= i}}^{k-d}S_j,\]
and let $\Lambda=\Lambda_1+\cdots +\Lambda_{k-d}$.

It is not difficult to check that if $Y$ is LR with respect to $\mc{R}_d$ then it is LR in the original sense (this follows almost immediately from the definitions). One of the crucial points in the proof of \cite[Theorem 1.1]{HaynKoivWalt2015a} established that if $Y$ is LR, then
\[\mathrm{rk}(\Lambda_i+S_i)=d\quad\text{for each}\quad 1\le i\le k-d,\]
and, in addition, that
\[\mathrm{rk}(\Lambda)=d.\]
Viewed another way, this means that if $Y$ is LR then we can find a basis for a sublattice of $\Z^d$, of full rank, with respect to which the matrix $(\alpha_{ij})$ defined by the linear forms $L_i$ becomes block diagonal.

Let $m_i$ denote the rank of $\Lambda_i$ and note that, by total irrationality, $m_i\ge 1$. We will now show that, if $Y$ is LR with respect to $\mc{R}_d$ then, for each $i$, the real subspace $X_i$ generated by $\Lambda_i$ is actually an $m_i$-dimensional coordinate plane, i.e. a subspace generated by $m_i$ of the standard basis vectors $e_1,\ldots ,e_d$.

Since $[\Z^d:\Lambda]<\infty,$ for each $1\le j\le d$ we can choose a positive integer $n_j$ so that
\begin{equation}\label{eqn.n_je_jExpansion}
n_je_j=\sum_{i=1}^{k-d}\lambda_{ij},
\end{equation}
with $\lambda_{ij}\in \Lambda_i$ for each $i$. Then, for each $N\in\N$ we define $\Omega_N^{(j)}\in\mc{R}_d$ by
\[\Omega_N^{(j)}=\{x\in\R^d : |x_j|\le n_jN~\text{ and }~ |x_i|\le 1 \text{ for }i\not=j\}.\]
For each $1\le i\le k-d,$ as $n$ runs over all elements of $\Omega_N^{(j)}\cap\Z^d$, the number of distinct values taken by $\mc{L}_i(n)$ is bounded above by $3^{d-1}$ if $\lambda_{ij}=0$, otherwise it is at least $2N+1$. This implies that the number of connected components of $\mathrm{reg}(\Omega_N^{(j)})$ is $\gg N^{\kappa_j}$, where $\kappa_j$ is the number of non-zero summands on the right hand side of \eqref{eqn.n_je_jExpansion}. For any constant $C>0$, the number of integer points in a ball of volume $C|\Omega_N^{(j)}|$ is $\ll CN$, so if $\kappa_j>1$ then for $N$ large enough it is impossible for such a ball to contain every patch of shape $\Omega_N^{(j)}$. This shows that if $Y$ is LR with respect to $\mc{R}_d$ then each of the standard basis vectors is contained in one of the subspaces $X_i$. By rank considerations, it follows that each of the subspaces $X_i$ is an $m_i$-dimensional coordinate plane.

Without loss of generality, by relabeling if necessary, assume that
\[X_1=\langle e_1,\ldots ,e_{m_1}\rangle_\R.\]
As $n$ runs over the elements of $\Omega_N^{(1)}\cap\Z^d$, the number of distinct values taken by $\mc{L}_1(n)$ is $\gg N$. However, for any $C>0$, as $n$ runs over the elements of $\Z^d$ in a ball of volume $C|\Omega_N^{(1)}|$, the number of distinct values taken by $\mc{L}_1(n)$ is $\ll (CN)^{m_1/d}$. Since $k-d>1$ and $m_1+\cdots +m_{k-d}=d$, we have that $N^{m_1/d}=o(N)$. This means that for $N$ large enough, it is impossible for the orbits of points in $\mc{W}$, under the action of the integers in a ball of volume $C|\Omega_N^{(1)}|$, to intersect every connected component of $\mathrm{reg}(\Omega_N^{(1)})$. Therefore, by the same argument as used in the previous section, the set $Y$ cannot be LR with respect to $\mc{R}_d$. This completes the proof of the $k-d>1$ case of Theorem \ref{thm.LRAlignedBoxes}.

Next suppose that $k-d=1$ and that $\alpha_1,\ldots ,\alpha_d\in\R$ satisfy condition (C2) in the statement of Theorem \ref{thm.LRAlignedBoxes}.
Let $E\subseteq\R^k$ be the subspace defined by the linear form
\begin{equation}\label{eqn.DefofL}
L(x)=\sum_{i=1}^dx_i\alpha_i,
\end{equation}
and let $Y$ be a cubical cut and project set obtained from $E$. By Lemma \ref{lem.LCDualForm} the numbers $1,\alpha_1,\ldots ,\alpha_d$ are $\Q$-linearly independent, from which it follows that $E$ is totally irrational, and that $Y$ is aperiodic.

From Lemma \ref{lem.LCDualForm} we also have that there is a constant $c>0$ with the property that, for any $\Omega\in\mc{R}_d$, and for any component interval $I$ of $\mathrm{reg}(\Omega)$,
\[|I|>\frac{c}{|\Omega|}.\]
On the other hand, by Lemma \ref{lem.LCInhomogForm} we can choose a constant $C>0$ so that, for any $\Omega\in\mc{R}_d$, the orbit of any regular point in $\mc{W}$ under the collection of integers in a ball of volume $C|\Omega|$ is $c/|\Omega|$-dense in $\mc{W}$. By the argument given in the previous section, this proves that $Y$ is LR with respect to $\mc{R}_d$.

Finally, suppose that $k-d=1,~d>1$, that $E$ is a totally irrational subspace defined by a linear form $L$ as in \eqref{eqn.DefofL}, with real numbers $\alpha_1,\ldots ,\alpha_d$ which do not satisfy condition (C2), and that $Y$ is a cubical cut and project set defined using this data. By Lemma \ref{lem.LCDualForm}, for every $c>0$ we can find an integer $n\in\Z^d$ with
\[\|n_1\alpha_1+\cdots +n_d\alpha_d\|<\frac{c}{(1+|n_1|)\cdots (1+|n_d|)}.\]
This implies that, for any $c>0$, we can find a shape $\Omega\in\mc{R}_d$ and a component interval $I$ of $\mathrm{reg}(\Omega)$ with
\[|I|<\frac{c}{|\Omega|}.\]
On the other hand, there is a constant $\eta>0$ with the property that, for any $C>0$, the number of integer points in a ball of volume $C|\Omega|$ is bounded above by $\eta C|\Omega|.$ Therefore, by the same argument used in the proof in the previous section, for any $C>0$ we can always choose $c>0$ small enough, and a corresponding shape $\Omega$ as above, so that there is a point in $\mc{W}$ whose orbit under the collection of integers in a ball of volume $C|\Omega|$ does not intersect every component interval of $\mathrm{reg}(\Omega)$. This proves that $Y$ is not LR with respect to $\mc{R}_d$, completing the proof of Theorem \ref{thm.LRAlignedBoxes}.

\section{Proof of Theorem \ref{thm.LR'Convex}}\label{sec.LR'Thm1}
The $k-d=1$ cases of Theorem \ref{thm.LR'Convex} follow from the same argument used in the proof of Theorem \ref{thm.LRConvex} above. Note that in the end of that proof we only needed to know that the {\it number} of points in a ball of volume $C|\Omega_{A,N}|$ is $\ll CN^d$. If the ball is replaced by the shape $C\Omega_{A,N}$ then this number is still $\ll C^dN^d,$ and the rest of the proof works as before. The conclusion is that $Y$ cannot be $\mathrm{LR}_\Omega$ with respect to $\mc{C}_d$, unless $k=2$ and $d=1$.

For the $k-d>1$ case of Theorem \ref{thm.LR'Convex} we will use some of the ideas from the beginning of the proof of Theorem \ref{thm.LRAlignedBoxes}. If $Y$ is a cubical cut and project set which is $\mathrm{LR}_\Omega$ with respect to $\mc{C}_d$ then, by just considering the subset of squares in $\mc{C}_d,$ it follows that $Y$ is LR in the usual sense. Therefore, the comments at the beginning of the proof of Theorem \ref{thm.LRAlignedBoxes} apply. Using the notation there, for each $1\le i\le k-d$ choose a non-zero element $\lambda_i\in\Lambda_i$, and then set
\[v=\lambda_1+\cdots +\lambda_{k-d}.\]
For each $N\in\N$ let $\Omega_N\in\mc{C}_d$ be the convex hull of the collection of points
\[\{e_1,\ldots ,e_d\}\cup\{e_i+Nv: 1\le i\le d\}.\]
For each $i$, as $n$ runs over $\Omega_N\cap \Z^d$, the number of distinct values taken by $\mc{L}_i(n)$ is $\gg N$. It follows that the number of connected components of $\mathrm{reg}(\Omega_N)$ is $\gg N^{k-d}$. However, for any $C>0$, the number of points in $C\Omega_N\cap\Z^d$ is $\ll C^d N$. It is clear from this that for large enough $N$, orbits of regular points in $\mc{W}$ under the action of the integers in $C\Omega_N$ cannot intersect every connected component of $\mathrm{reg}(\Omega_N)$. This contradicts our original assumption, forcing us to conclude that $Y$ cannot be $\mathrm{LR}_\Omega$ with respect to $\mc{C}_d$.

\section{Proofs of Theorems \ref{thm.LR'AlignedBoxes} and \ref{thm.LR',k->d}}\label{sec.LR'Thms2,3}
First we present the proof of Theorem \ref{thm.LR',k->d}. The statement of Theorem \ref{thm.LR'AlignedBoxes} will follow easily from our proof.

For one direction of the proof, suppose that (C2') is satisfied and let $Y$ be a $k$ to $d$ cubical cut and project set defined using linear forms
\begin{equation}\label{eqn.BlockDiagLinForms}
L_i(x)=\sum_{j=1}^{m_i}x_{M_i+j}\alpha_{ij},~1\le i\le k-d,
\end{equation}
where $M_1=0$ and $M_i=m_1+\cdots +m_{i-1}$ for $i\ge 2$. As before, the facts that $E$ is totally irrational and that $Y$ is aperiodic follow from Lemma \ref{lem.LCDualForm}.

Suppose that $N_1,\ldots ,N_{k-d}\in\N$, for each $i$ let $\Omega_i\in\R^{m_i}$ be an aligned rectangle with integer vertices and volume $N_i$, and suppose that $\Omega\in\mc{R}_d$ is given by
\[\Omega=\Omega_1\times\cdots\times\Omega_{k-d}.\]
It is clear that every element of $\mc{R}_d$ can be written in this way, for some choice of $\{N_i\}$ and $\{\Omega_i\}$.

By Lemma \ref{lem.LCDualForm}, there is a constant $c>0$ with the property that, for each $i$, the distinct values of $\mc{L}_i(n)$, as $n$ runs over $\Omega\cap\Z^d$, are separated by a distance greater than $c/N_i$. On the other hand, by Lemma \ref{lem.LCInhomogForm}, we can choose $C>0$ so that the values of $\mc{L}_i(n)$, as $n$ runs over $C\Omega\cap\Z^d$, are at least $c/N_i$-dense. As before, this implies that $Y$ is $\mathrm{LR}_\Omega$ with respect to $\mc{R}_d$.

For the other direction of the proof, assume that (C2') does not hold. If $Y$ is $\mathrm{LR}_\Omega$ with respect to $\mc{R}_d$ then it is LR, in the usual sense. Suppose that this is the case and, for each $i$, let $m_i, \Lambda_i,$ and $X_i$ be as in the proof of Theorem \ref{thm.LRAlignedBoxes}.

We claim first of all that the same argument used in the proof of Theorem \ref{thm.LRAlignedBoxes} shows, with $\mathrm{LR}_\Omega$ instead of LR, that each $X_i$ is contained in an $m_i$-dimensional coordinate plane. To verify this, notice that the only place where the argument would differ, is in the sentence which points out that the number of integer points in a ball of volume $C|\Omega_N^{(j)}|$ is $\ll CN$. For the $\mathrm{LR}_\Omega$ argument this could be replaced by the statement that the number of integer points in $C\Omega_N^{(j)}$ is $\ll C^dN$. The rest of the proof follows exactly as before, verifying our claim.

Now by relabeling coordinates we can assume that $E$ is defined by linear forms $\{L_i\}$ as in \eqref{eqn.BlockDiagLinForms}. Since (C2') does not hold, there is an integer $i$ for which
\[\liminf_{n\rar\infty}n\|n\alpha_{i1}\|\cdots\|n\alpha_{im_i}\|=0.\]
The proof is then a consequence of Lemmas \ref{lem.LCDualForm} and \ref{lem.LCInhomogForm}, using the same argument presented at the end of the proof of Theorem \ref{thm.LRAlignedBoxes}.

For the proof of Theorem \ref{thm.LR'AlignedBoxes}, notice that in the case when $k=2d$, we must take $m_1=\cdots =m_d=1$. Then condition (C2') is precisely the condition that
\[\alpha_{i1}\in\mc{B}_1,\]
for each $i$. By Jarnik's Theorem (mentioned in Section \ref{sec.Approx}), together with a standard Hausdorff dimension argument, this set has Hausdorff dimension $d$.

\section{Canonical cut and project sets}\label{sec.Canonical}
In this section we present the proof of Theorem \ref{thm.Canonical}. We will use two lemmas to translate results about cubical cut and project sets to their canonical counterparts.
\begin{lemma}\label{lem.ChangeofWindow1}
Let $Y_1$ be a $k$ to $d$ cubical cut and project set, and let $Y_2$ be a cut and project set formed from the same data as $Y_1$, but with the canonical window. Further assume that, for each $1\le i\le d$, the point $\rho^*(e_i)$ lies on the line $\R e_j$, for some $d+1\le j\le k$. Suppose that $\mc{A}$ is a collection of bounded convex sets, with inradii uniformly bounded away from $0$. Then $Y_1$ is LR with respect to $\mc{A}$ if and only if $Y_2$ is, and $Y_1$ is $\mathrm{LR}_\Omega$ with respect to $\mc{A}$ if and only if $Y_2$ is.
\end{lemma}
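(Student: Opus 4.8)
The plan is to reduce the question of whether $Y_1$ and $Y_2$ share the same linear repetitivity properties to a comparison of their patch structures, which (by Lemma \ref{lem.ConnComp}) amounts to a comparison of the connected components of $\mathrm{reg}(\Omega)$ in the two internal spaces. The crucial geometric input is the extra hypothesis that each $\rho^*(e_i)$ lies on a coordinate axis $\R e_j$ with $d+1\le j\le k$. Under this hypothesis, the cubical window $\mc{W}_1$ (the unit cube in $F_\rho=\{0\}\times\R^{k-d}$) and the canonical window $\mc{W}_2=\rho^*([0,1]^k)$ are related in a controlled way: the canonical window is a parallelotope whose generating vectors are the images $\rho^*(e_j)$, and the singular set $\mathrm{sing}(\Omega)$ in each case is the image of the same finite set of integer points $(-(\rho^{-1}\circ\rho)(\Omega)\cap\Z^k)$ acting on the respective boundaries $\partial\mc{W}_i$. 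First I would write out both windows explicitly in the coordinates of $F_\rho$, using the hypothesis to see that the translation lattices defining the two $\Z^k$-actions agree (both are governed by $n.w = w+(0,n_2-L(n_1))$), so that only the \emph{shape} of the fundamental domain differs, not the dynamics.

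Next I would establish a Lipschitz bi-correspondence between the two windows. The idea is that there is a bijective piecewise-linear map $\Phi:\mc{W}_1\to\mc{W}_2$, compatible with the $\Z^k$-action modulo the window identifications, under which the singular sets correspond: $\Phi(\mathrm{sing}_1(\Omega))=\mathrm{sing}_2(\Omega)$ up to the lattice action. Because the hypothesis forces each $\rho^*(e_i)$ onto a single coordinate axis, the two windows are related by a volume-preserving shear that is bounded with bounded inverse, so there are constants $0<c_1\le c_2$ with the property that any component $U_1$ of $\mathrm{reg}_1(\Omega)$ has its image contained in a union of boundedly many components of $\mathrm{reg}_2(\Omega)$ and vice versa, with comparable diameters and measures. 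The key quantitative statement I would isolate is: a regular point's orbit under the integers in a ball (or dilate) of volume $C|\Omega|$ is $\delta$-dense in $\mc{W}_1$ if and only if it is $\delta'$-dense in $\mc{W}_2$, with $\delta\asymp\delta'$ uniformly in $\Omega$. Once this uniform comparison of densities is in hand, the LR and $\mathrm{LR}_\Omega$ criteria — which by the arguments in Sections \ref{sec.LRThm2} and \ref{sec.LR'Thms2,3} are exactly statements about whether orbits of volume $\asymp C|\Omega|$ become $\asymp c/|\Omega|$-dense — transfer directly, with the constant $C$ changing only by a bounded factor.

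I would then handle the two equivalences in parallel. For LR with respect to $\mc{A}$, the uniform inradius bound guarantees that the comparison of component sizes is uniform across all $\Omega\in\mc{A}$, so no single shape can destroy the equivalence; for $\mathrm{LR}_\Omega$ the argument is identical except that the search region is $C\Omega$ rather than a ball of volume $C|\Omega|$, and since the sheared correspondence commutes with dilation of $\Omega$ this causes no difficulty. In both directions the logical structure is symmetric, so proving one implication gives the reverse by applying the same Lipschitz correspondence to $\Phi^{-1}$.

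The main obstacle I expect is controlling the boundary behavior of the window correspondence: the canonical window $\mc{W}_2$ is a genuinely different polytope from the cube $\mc{W}_1$, and the singular sets $\mathrm{sing}(\Omega)$ are codimension-one subsets sitting inside these polytopes, so I must ensure that the piecewise-linear map $\Phi$ does not merge or split components of the regular set in an uncontrolled way, nor create components whose measure fails to be comparable. This is precisely where the hypothesis that each $\rho^*(e_i)$ lies on a coordinate line is doing the work: it forces $\Phi$ to be a shear aligned with the coordinate directions rather than a general affine distortion, which keeps the images of the hyperplanes defining $\mathrm{sing}(\Omega)$ parallel to their preimages and hence preserves the combinatorial structure of the component decomposition up to a bounded multiplicity. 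I would spend most of the effort making this combinatorial stability precise and uniform in $\Omega$, since everything else reduces to the density estimates already developed in the cubical case.
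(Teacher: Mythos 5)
Your reduction to internal space breaks down at its central step: the map $\Phi:\mc{W}_1\to\mc{W}_2$ you describe cannot exist. The canonical window is the Minkowski sum $\mc{W}_2=\mc{W}_1+\mc{W}'$, where $\mc{W}'=\rho^*\bigl(\bigl\{\sum_{i=1}^d t_ie_i:0\le t_i\le 1\bigr\}\bigr)$; since aperiodicity forces $\rho^*(e_i)\neq 0$ for every $i$, the set $\mc{W}_2$ strictly contains $\mc{W}_1$ and has strictly larger volume, so no ``volume-preserving shear'' (indeed no bijection of bounded distortion preserving Lebesgue measure) can carry one onto the other. Asking in addition for compatibility with the $\Z^k$-action makes this contradiction sharper rather than milder: the action is by translations with dense orbits in $F_\rho$ modulo $\Z^{k-d}$, so an equivariant map between the windows would have to preserve Haar measure, which the volume mismatch forbids. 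There is also a combinatorial obstruction: the canonical window is a zonotope which is in general not a sheared cube (for Ammann--Beenker it is a regular octagon), and even under this lemma's hypothesis, where $\mc{W}_2$ becomes an aligned box, its side lengths are irrational, so $\mc{W}_2$ is \emph{not} a parallelotope generated by integer vectors. Consequently Lemma \ref{lem.ConnComp} --- the tool your whole plan rests on, since you want to identify patch classes of $Y_2$ with connected components of $\mathrm{reg}(\Omega)$ --- is simply not available for $Y_2$, and the orbit-density criteria of Sections \ref{sec.LRThm2} and \ref{sec.LR'Thms2,3} have not been established for canonical windows. Finally, note that $Y_1\subsetneq Y_2$: the canonical set has strictly more points and more patch types, so no patch-by-patch bijection between the two sets should be expected in the first place.

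The paper's proof avoids internal space entirely and argues by mutual local derivability in physical space. Setting $v_i=\pi(e_i)$ and $v_I=\sum_{i\in I}v_i$, and letting $I_y^{(1)}$ (resp.\ $I_y^{(2)}$) denote the set of indices $i$ with $y+v_i\notin Y_1$ (resp.\ $y+v_i\notin Y_2$), the hypothesis that each $\rho^*(e_i)$ lies on a coordinate line $\R e_j$, $d+1\le j\le k$, is used exactly once: to prove
\[
Y_2=\bigl\{y+v_I: y\in Y_1,\ I\subseteq I_y^{(1)}\bigr\},\qquad Y_1=\bigl\{y\in Y_2: I_y^{(2)}=\emptyset\bigr\}.
\]
In words, the extra points of $Y_2$ are generated locally by points of $Y_1$, and membership in $Y_1$ is locally detectable inside $Y_2$; the role of the hypothesis is to make the inclusion $\rho^*(\tilde y+e_I+s)\in\mc{W}_2$ factor coordinate-by-coordinate, not to align any shear. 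From these formulas one gets a constant $c$ such that the points of either set in a ball of radius $r$ determine the points of the other set in the concentric ball of radius $r-c$, and repetitivity (LR or $\mathrm{LR}_\Omega$) then transfers with constants worsened by a bounded amount. The uniform lower bound on the inradii of the shapes in $\mc{A}$ is precisely what makes the loss of a collar of width $c$ harmless. If you want to repair your argument, it must confront the asymmetry $Y_2\supsetneq Y_1$ through some form of local derivation rather than through a bijective correspondence of windows or of patch classes.
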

\begin{proof}
We will show that there is a constant $c>0$ with the property that, for all sufficiently large $r$, the collection of all points in a ball of size $r$ in $Y_1$ uniquely determines the points in a ball of size $r-c$ in $Y_2$ and, in the other direction, that every collection of points in a ball of size $r$ in $Y_2$ uniquely determines the points in a ball of size $r-c$ in $Y_1$. This easily implies that one of the sets is LR (or $\mathrm{LR}_\Omega$) with respect to $\mc{A}$ if and only if both are.

Write $\mc{W}_2$ for the canonical window (in $F_\rho$), and let $\mc{W}'\subseteq F_\rho$ be the image under $\rho^*$ of the parallelotope generated by the standard basis vectors $e_1,\ldots ,e_d$. Then it is clear that
\[\mc{W}_2=\mc{W}_1+\mc{W}',\]
and the points in $Y_2\setminus Y_1$ correspond precisely to integer points which are mapped by $\rho^*$ into $\mc{W}_2\setminus \mc{W}_1$.

For each $1\le i\le d$, let $v_i=\pi(e_i)$, and for each subset $I\subseteq\{1,\ldots ,d\}$, let
\[v_I=\sum_{i\in I}v_i,\]
with $v_\emptyset$ taken to be $0$. For each $y\in Y_2$, let $I_y^{(1)}\subseteq\{1,\ldots ,d\}$ denote the collection of indices $i$ for which
\[y+v_i\not\in Y_1,\]
and, similarly, let $I_y^{(2)}$ denote the collection of indices $i$ for which
\[y+v_i\not\in Y_2.\]
Then, by what we said in the previous paragraph,
\[Y_2=\{y+v_I:y\in Y_1, I\subseteq I_y^{(1)}\}.\]
The reader may wish to note that that this is where we are relying on the assumption that each of the quantities $\rho^*(e_i)$ lies on a line of the form $\R e_j$. It follows that we can find a constant $c>0$ such that, for every $x\in E$ and $r>c$,
\[Y_2\cap B(x,r-c/2)=\{y+v_I:y\in Y_1\cap B(x,r), I\subseteq I_y^{(1)}\}\cap B(x,r-c/2).\]
In the other direction, we have that
\[Y_1=Y_2\setminus\{y\in Y_2:I^{(2)}_y\not=\emptyset\},\]
which means that
\[Y_1\cap B(x,r)=\{y:y\in Y_2\cap B(x,r), I^{(2)}_y=\emptyset\}.\]
We can assume that $c$ has been chosen so that $|v_i|\le c/2$ for all $i$. Therefore we have verified the assertion at the beginning of the proof, that balls of size $r$ in either one of sets, $Y_1$ or $Y_2$, uniquely determine balls of size $r-c$ in the other.
\end{proof}
Without the hypothesis that the inradii of the elements of $\mc{A}$ are uniformly bounded away from $0$, the result of this lemma would not follow immediately from the proof we have given. It is not clear to us whether or not the lemma is still valid with this assumption omitted.

A slightly less obvious fact is that the statement of this lemma is not true in general without the hypotheses on the projections of standard basis vectors (see \cite{HaynKoivWalt2015a} for  examples where the conclusion of the lemma fails). However, even in the absence of the projection hypotheses, one direction of the proof still works exactly as before, giving us the following result.
\begin{lemma}\label{lem.ChangeofWindow2}
Let $Y_1$ be a totally irrational $k$ to $d$ cut and project set, constructed with the window $\mc{W}_1$ from the previous lemma, and let $Y_2$ be a cut and project set formed from the same data as $Y_1$, but with the canonical window. If $Y_1$ is not LR (or not $\mathrm{LR}_\Omega$) with respect to $\mc{R}_d$ (or with respect to $\mc{C}_d$), then neither is $Y_2$.
\end{lemma}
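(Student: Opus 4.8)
The plan is to prove Lemma~\ref{lem.ChangeofWindow2} by extracting the ``easy'' half of the argument used in Lemma~\ref{lem.ChangeofWindow1}, namely the half that does not require the projection hypotheses. Recall that the projection hypothesis (that each $\rho^*(e_i)$ lies on a coordinate line $\R e_j$) was used in the previous proof only to obtain the clean description $Y_2=\{y+v_I:y\in Y_1,\ I\subseteq I_y^{(1)}\}$, which expresses $Y_2$ as a controlled enlargement of $Y_1$. For the present one-directional statement we do not need that description; we only need the reverse containment $Y_1\subseteq Y_2$ together with the fact that a large ball in $Y_2$ determines the corresponding ball in $Y_1$.

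First I would set up the windows exactly as before: write $\mc{W}_2$ for the canonical window and $\mc{W}_1$ for the cubical window, and observe that $\mc{W}_1\subseteq\mc{W}_2$ (indeed $\mc{W}_2=\mc{W}_1+\mc{W}'$, where $\mc{W}'=\rho^*$ of the parallelotope generated by $e_1,\ldots,e_d$), so that $Y_1\subseteq Y_2$. The key point, which survives without the projection hypotheses, is the second direction of the previous proof: a point $y\in Y_2$ lies in $Y_1$ exactly when $I_y^{(2)}=\emptyset$, i.e.\ when none of the translates $y+v_i$ leaves $Y_2$. Since the $v_i=\pi(e_i)$ are finitely many fixed vectors of bounded norm, there is a constant $c>0$ with $|v_i|\le c/2$ for all $i$, so the relation
\[
Y_1\cap B(x,r)=\{y:y\in Y_2\cap B(x,r+c/2),\ I_y^{(2)}=\emptyset\}
\]
shows that the points of $Y_2$ in a ball of size $r+c/2$ uniquely determine the points of $Y_1$ in the ball of size $r$. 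This is precisely the ``$Y_2$ determines $Y_1$'' half of Lemma~\ref{lem.ChangeofWindow1}, and it requires no information about where the $\rho^*(e_i)$ land.

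With this mutual-determination statement in hand, the conclusion follows by contraposition. Suppose $Y_2$ \emph{were} LR with respect to $\mc{R}_d$ (the arguments for $\mathrm{LR}_\Omega$ and for $\mc{C}_d$ are identical). Then every pattern of shape $\Omega$ occurring in $Y_2$ reoccurs in every suitably large ball (or dilate of $\Omega$). Because a ball of radius $r+c/2$ in $Y_2$ pins down the corresponding radius-$r$ ball in $Y_1$, repetitivity of $Y_2$ transfers to repetitivity of $Y_1$, with the repetition constant enlarged by a fixed additive constant depending only on $c$; since we are working with collections whose shapes are large, this additive loss is absorbed into a multiplicative constant. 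Hence $Y_1$ would be LR with respect to $\mc{R}_d$, contradicting the hypothesis that it is not. This yields exactly the assertion of the lemma.

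The main obstacle, and the only place demanding care, is the bookkeeping between the two shape-based notions (LR versus $\mathrm{LR}_\Omega$) and the two collections ($\mc{R}_d$ versus $\mc{C}_d$) to confirm that the one-directional determination argument above applies uniformly to all four cases. Concretely, one must check that translating a radius-$(r+c/2)$ statement into a radius-$r$ statement respects the notion of a pattern of shape $\Omega$ occurring in a \emph{dilate} $C\Omega$ rather than in a ball, and that the additive constant $c$ does not interfere once the shapes are taken large enough (which is guaranteed since the inradii of the relevant shapes grow). I expect this to be routine given the patch formalism of Section~\ref{sec.Prelim.Patterns}, but it is the step where the statement ``one direction of the proof still works exactly as before'' must be verified rather than merely asserted.
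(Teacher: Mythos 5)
Your proposal is essentially the paper's own proof: the paper disposes of this lemma in a single remark, observing that the ``$Y_2$ determines $Y_1$'' half of the proof of Lemma \ref{lem.ChangeofWindow1} (the relation $Y_1=\{y\in Y_2:\ I_y^{(2)}=\emptyset\}$, together with the contrapositive transfer of repetitivity from $Y_2$ to $Y_1$) never uses the hypothesis on the projections $\rho^*(e_i)$, which is exactly the argument you give. The one caveat is that your parenthetical justification that ``the inradii of the relevant shapes grow'' is incorrect for $\mc{C}_d$, whose elements can have arbitrarily small inradius (the paper itself flags this obstacle when explaining why its $\mc{C}_d$ results do not extend to canonical windows, and Lemma \ref{lem.ChangeofWindow1} is stated only for collections with inradii uniformly bounded below); this makes your bookkeeping step genuinely sound only for $\mc{R}_d$, but since the paper's own one-line proof glosses over the identical point, your treatment matches the paper's in both its content and its imprecision.
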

The statements of Theorem \ref{thm.LRAlignedBoxes} and Corollaries \ref{cor.LRifLCTrue} and \ref{cor.LR'ifLCTrue}, with `cubical' replaced by `canonical', follow immediately from the previous two lemmas. The reader may wish to note that the proof of Theorem \ref{thm.LR'AlignedBoxes} can also be used to construct $2d$ to $d$ canonical cut and project sets which are $\mathrm{LR}_\Omega$ with respect to $\mc{R}_d$, since the corresponding canonical windows will, in that case, satisfy the hypotheses of Lemma \ref{lem.ChangeofWindow1}.

\section{An alternate choice of shapes and an open problem}\label{sec.AltChoice}
In the introduction we mentioned that certain geometric conditions must be imposed on the shapes in $\mc{A}$ in order to make the generalized definitions of LR and $\mathrm{LR}_\Omega$ interesting. Throughout the paper we have studied shapes which are subsets of the collection of convex polytopes with integer vertices. However, it would also have been natural to study collections of convex shapes with inradii uniformly bounded from below. To this end, let $\mc{C}_d'$ denote the collection of {\it convex sets} in $\R^d$ with inradii $\ge 1/2$. We may then ask whether or not, for $d>1$, there are any cubical cut and project sets which are LR (or $\mathrm{LR}_\Omega$) with respect to $\mc{C}_d'$. Note that the set $\mc{R}_d$ is a subset of $\mc{C}_d'$, and it is not difficult to show that our theorems above answer the corresponding questions about LR and $\mathrm{LR}_\Omega$ for the subset of $\mc{C}_d'$ consisting of aligned rectangles.

For $d>1$ it seems very unlikely that there are any cubical (or canonical) cut and project sets which are LR or $\mathrm{LR}_\Omega$ with respect to $\mc{C}_d'$, but we are unable to completely resolve this problem. Nevertheless, we present the following conjecture for future work.
\begin{conjecture}\label{conj.C_d'}
For $d>1$, there are no cubical or canonical cut and project sets which are $\mathrm{LR}$ or $\mathrm{LR}_\Omega$ with respect to $\mc{C}_d'$.
\end{conjecture}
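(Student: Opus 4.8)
The plan is to show that non-existence survives the passage from $\mc{R}_d$ to the strictly larger collection $\mc{C}_d'$ by producing, for every candidate set $Y$, a \emph{fat} convex shape in $\mc{C}_d'\setminus\mc{R}_d$ that destroys repetitivity. First I would reduce. Since $\mc{C}_d'$ contains every dilate of the unit cube by a factor at least $1$, being LR (or $\mathrm{LR}_\Omega$) with respect to $\mc{C}_d'$ forces $Y$ to be LR in the usual sense; hence, by the structure theory underlying the proof of Theorem \ref{thm.LRAlignedBoxes} (from \cite{HaynKoivWalt2015a}), after passing to a finite-index sublattice of $\Z^d$ the defining forms become block diagonal, and I may assume $L_i$ depends only on the $i$-th block of $m_i$ coordinates, with $m_1+\cdots+m_{k-d}=d$. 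The canonical case would be deduced from the cubical one through Lemma \ref{lem.ChangeofWindow1}, whose hypothesis that inradii are bounded away from $0$ is exactly the defining feature of $\mc{C}_d'$, together with Lemma \ref{lem.ChangeofWindow2} in the one-directional situations where the projection hypotheses fail.

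For $k-d\ge 2$ I would argue by pattern complexity. Given a large parameter $Q$, let $\Omega_Q\in\mc{C}_d'$ be the spherocylinder of radius $1/2$ about the segment from $0$ to $(Q,\ldots,Q)$; this is convex with inradius $1/2$ and volume $\asymp Q$, and a direct distance computation shows its only lattice points are the diagonal points $(t,\ldots,t)$, $0\le t\le Q$. By Lemma \ref{lem.ConnComp} the patterns of shape $\Omega_Q$ are indexed by the cells of the axis-parallel wall arrangement in the $(k-d)$-dimensional window, and the wall positions in the $i$-th window coordinate are $V_i=\{t\beta_i\mo 1: 0\le t\le Q\}$, where $\beta_i$ is the (irrational) sum of the entries of $L_i$. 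Hence there are $\prod_i|V_i|\asymp Q^{k-d}$ distinct occurring patterns, whereas any translate of $C\Omega_Q$, or any ball of volume $C|\Omega_Q|$, contains only $\asymp Q$ lattice points and so meets at most $\asymp Q$ cells. For $k-d\ge 2$ this leaves occurring patterns unseen once $Q$ is large, contradicting $\mathrm{LR}_\Omega$ (and LR); for LR the conclusion also follows directly from Theorem \ref{thm.LRAlignedBoxes} since $\mc{R}_d\subseteq\mc{C}_d'$.

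The case $k-d=1$ with $d>1$ is the one I expect to be hardest, since then there is a single form $L$, the number of patterns of shape $\Omega$ never exceeds $|\Omega|$, and the complexity argument is unavailable. In Theorem \ref{thm.LRConvex} the required anomalously small cell was manufactured from the shears $A\cdot[-N,N]^d$ with $A\in\mathrm{SL}_d(\Z)$, but these have inradius tending to $0$ and so are excluded from $\mc{C}_d'$. My replacement is again a spherocylinder: by Minkowski's theorem pick $w\in\Z^d$ with $0<|w|\le Q$ and $\|L(w)\|\le Q^{-d}$, and take $\Omega_Q\in\mc{C}_d'$ to be the radius-$1/2$ sausage from $0$ to $w$, so that $|\Omega_Q|\asymp|w|\le Q$ while $\mathrm{sing}(\Omega_Q)$ contains the two values $0$ and $\{L(w)\}$ at circle-distance at most $\|L(w)\|$. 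Then $\mathrm{reg}(\Omega_Q)$ has a component of length at most $Q^{-d}$, which for $d>1$ is much smaller than $1/|\Omega_Q|\asymp 1/Q$; since the orbit of any regular point under the lattice points of a ball of volume $C|\Omega_Q|$ (or of $C\Omega_Q$) has a gap of length $\gtrsim 1/|\Omega_Q|$, sliding this gap over the small component by total irrationality, exactly as in Section \ref{sec.LRThm1}, shows that $Y$ is neither LR nor $\mathrm{LR}_\Omega$ with respect to $\mc{C}_d'$.

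The step I expect to present the real difficulty is the rigorous control of the lattice points and the induced gap geometry of these non-aligned fat bodies. Unlike aligned rectangles, a general member of $\mc{C}_d'$ need not meet $\Z^d$ in a product set, so one must verify both that the spherocylinders above contain exactly the claimed lattice points and that the resulting walls sit in sufficiently general position for the cell count $\prod_i|V_i|$ not to collapse; making these estimates uniform over the whole family $\mc{C}_d'$, and confirming that the canonical transfer goes through under only the weaker Lemma \ref{lem.ChangeofWindow2} when Lemma \ref{lem.ChangeofWindow1} does not apply, are the remaining technical hurdles.
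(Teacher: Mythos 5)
First, a point of order: the statement you were asked to prove is Conjecture~\ref{conj.C_d'}, which the paper explicitly leaves \emph{open} --- there is no proof in the paper to compare against. The authors even explain why their own methods stall: the small-component construction of Sections~\ref{sec.LRThm1} and \ref{sec.LR'Thm1} uses the shapes $A\cdot[-N,N]^d$ with $A\in\mathrm{SL}_d(\Z)$, whose inradius they cannot bound from below, and they speculate that a Littlewood-type hypothesis (close to, but not implied by, Tao's result on $\inf_{A}\ell(Ax)$) is what is missing. So your proposal has to be judged as an attack on an open problem.

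That said, your central idea --- replacing the sheared cubes by spherocylinders of radius $1/2$ --- appears to me to be sound, and it is exactly the move that sidesteps the paper's stated obstruction. A convex body of inradius $1/2$ containing $0$ and a lattice vector $w$ need only have volume $\asymp|w|$, \emph{linear} in its diameter, whereas a parallelepiped $A[-N,N]^d$ of bounded inradius has volume comparable to the $d$-th power of its diameter. Consequently, in the critical case $k-d=1$, $d>1$, the Diophantine input your argument needs is only Dirichlet's theorem: there is $w\in\Z^d$ with $|w|\le Q$ and $0<\|L(w)\|\le Q^{-d}$, hence $|\Omega_Q|\cdot\|L(w)\|\ll Q^{1-d}\rar 0$, so the component of $\mathrm{reg}(\Omega_Q)$ trapped between the singular points $0$ and $\{L(w)\}$ is much shorter than the gap $\gg 1/(C|\Omega_Q|)$ left by the orbit of any point under the lattice points of a ball of volume $C|\Omega_Q|$ (or of $C\Omega_Q$); the density/sliding argument of Section~\ref{sec.LRThm1}, together with Lemma~\ref{lem.ConnComp}, then defeats both LR and $\mathrm{LR}_\Omega$. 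Note this works even against a hypothetical Littlewood counterexample: there the dual form forces every coordinate of the Dirichlet vector $w$ to have size $\asymp Q$, so aligned boxes containing $w$ have volume $\asymp Q^d$ and Theorem~\ref{thm.LRAlignedBoxes} survives, but your tube containing $w$ has volume only $\asymp Q$, which is why it wins. Also note that in this case you do not need to control the lattice points inside the tube at all; only the two singular points $0$, $\{L(w)\}$ and the volume enter.

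The gaps that remain are real but, as far as I can see, fixable, and you should address them explicitly. (i) In the $k-d\ge 2$ argument you assert that the sums $\beta_i$ are irrational; for the literal diagonal this needs proof. Either prove it in the block-diagonal normalization (if $\beta_i=p/q$, then $q$ times the indicator vector of the $i$-th block lies in every $S_j$, producing a period and contradicting aperiodicity), or, more robustly, reuse the paper's own direction $v=\lambda_1+\cdots+\lambda_{k-d}$ from the proof of Theorem~\ref{thm.LR'Convex}, where $\mc{L}_i(v)=\mc{L}_i(\lambda_i)$ is irrational for exactly this reason, and thicken that prism to inradius $1/2$; the counts $\ll_v N$ lattice points and $\gg N$ walls per coordinate survive. (ii) Your worry about walls in ``general position'' is a non-issue: for the cubical window the singular set is a union of full coordinate-hyperplane slices of the torus, so the cells are boxes and the count is exactly the product. (iii) For the canonical case, Lemma~\ref{lem.ChangeofWindow2} is stated only for $\mc{R}_d$ and $\mc{C}_d$; you need its routine extension to $\mc{C}_d'$, which follows from the one direction of the proof of Lemma~\ref{lem.ChangeofWindow1} that uses no projection hypothesis, and which uses precisely the defining feature of $\mc{C}_d'$ (inradii bounded below). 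Since a correct write-up would settle a conjecture the authors tie to nontrivial Diophantine approximation questions, these details deserve unusual care, but the skeleton you propose does appear to carry the weight.
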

The issue in applying our above arguments to try to settle this conjecture is that, in the proofs of Theorems \ref{thm.LRConvex} and \ref{thm.LR'Convex}, we used the fact that we could choose the matrix $A$ so that $\|\beta_1\|< C'/N^d$. However, $\Omega_{A,N}$ was given by
\[\Omega_{A,N}=A\cdot [-N,N]^d,\]
and the proof, in its current form, does not allow us to give a lower bound on the inradius of this shape.

As a final comment about this problem, for each $x\in\R^d$, let
\[\ell(x)=\liminf_{n\rar\infty}n\|nx_1\|\cdots \|nx_d\|.\]
In light of the above proofs, one might speculate that, in order to establish Conjecture \ref{conj.C_d'}, it might be sufficient to show that if $d>1$ then, for every $x\in\R^d$,
\[\inf_{A\in\mathrm{SL}_d(\Z)}\ell\left(Ax\right)=0.\]
This problem, which is a substantial weakening of the Littlewood conjecture, was recently resolved in an online post by Terence Tao \cite{Tao2015}. Unfortunately, the proof of Conjecture \ref{conj.C_d'} appears to require a slightly different Diophantine approximation hypothesis, which does not follow  from Tao's result. We leave it to the interested reader to carry out the details of the calculations needed to make these statements precise and, hopefully, to resolve the above conjecture.

\vspace{.2in}

{\footnotesize
\noindent Department of Mathematics, University of York,\\
Heslington, York, YO10 5DD, England\\
alan.haynes@york.ac.uk\\henna.koivusalo@york.ac.uk\\jamie.walton@york.ac.uk
}

\end{document}